\theoremstyle{plain}
\newtheorem{theorem}{Theorem}[section]
\newtheorem{corollary}[theorem]{Corollary}
\newtheorem{prop}[theorem]{Proposition}
\newtheorem{observation}[theorem]{Observation}
\newtheorem{lemma}[theorem]{Lemma} 
\newtheorem{conjecture}[theorem]{Conjecture}
\newtheorem{question}[theorem]{Question}
\theoremstyle{definition}
\newtheorem{definition}[theorem]{Definition}
\newtheorem{example}{Example}
\newtheorem*{remark*}{Remark}
\newtheorem*{claim*}{Claim}
\theoremstyle{remark}
\newtheorem{remark}[theorem]{Remark}
\def\dom{\mathop{\mathrm{Dom}}\nolimits}
\def\str#1{\mathbf {#1}}
\def\Fraisse{Fra\"{\i}ss\' e}
\def\Aut{\mathop{\mathrm{Aut}}\nolimits}
\def\tp{\mathop{\mathrm{tp}}\nolimits}
\def\Semig{{\mathfrak M}}
\def\ESemig{\Semig=(M,\oplus,\mleq)}
\def\mleq{\preceq}
\def\mgeq{\succeq}
\def\nmgeq{\not\succeq}
\def\mgt{\succ}
\def\mlt{\prec}
\def\F{\mathbb{F}}
\def\Ind#1#2{#1\setbox0=\hbox{$#1x$}\kern\wd0\hbox to 0pt{\hss$#1\mid$\hss}
\lower.9\ht0\hbox to 0pt{\hss$#1\smile$\hss}\kern\wd0}
\def\Notind#1#2{#1\setbox0=\hbox{$#1x$}\kern\wd0\hbox to
0pt{\mathchardef\nn="0236\hss$#1\nn$\kern1.4\wd0\hss}\hbox 
to 0pt{\hss$#1\mid$\hss}\lower.9\ht0
\hbox to 0pt{\hss$#1\smile$\hss}\kern\wd0}
\def\ind{\mathop{\mathpalette\Ind{}}}
\def\nind{\mathop{\mathpalette\Notind{}}}
\def\pocs{partially ordered commutative semigroup}
\def\Aclass{\mathcal A^\delta_{K_1,K_2,C_1,C_2}}
\begin{document}
\bibliographystyle{alpha}

\title[]{Simplicity of the automorphism groups of generalised metric spaces}

\authors{
\author[D. M. Evans]{David M. Evans}
\address{Department of Mathematics\\ Imperial College London\\ London SW7~2AZ\\ UK.}
\email{david.evans@imperial.ac.uk}
\author[J. Hubi\v cka]{Jan Hubi\v cka}
\address{Department of Applied Mathematics (KAM)\\ Charles University\\ Prague, Czech Republic}
\email{hubicka@kam.mff.cuni.cz}
\author[M. Kone\v cn\'y]{Mat\v ej Kone\v cn\'y}
\address{Department of Applied Mathematics (KAM)\\ Charles University\\ Prague, Czech Republic}
\email{matej@kam.mff.cuni.cz}
\author[Y. Li]{Yibei Li}
\address{Imperial College London\\ London SW7~2AZ\\ UK.}
\email{yibei.li16@imperial.ac.uk}
\author[M. Ziegler]{Martin Ziegler}
\address{Mathematisches Institut\\ Albert--Ludwigs--Universität Freiburg\\ D-79104 Freiburg, Germany}
\email{ziegler@uni-freiburg.de}}
\date{}
\thanks{Jan Hubi\v cka and Mat\v ej Kone\v cn\'y are supported by project 18-13685Y of the Czech Science Foundation (GA\v CR) and by a project that has received funding from the European Research Council (ERC) under the European Union’s Horizon 2020 research and innovation programme (grant agreement No 810115). Mat\v ej Kone\v cn\'y is supported by the Charles University project GA UK No 378119. Yibei Li is supported by President’s scholarship from Imperial College. Jan Hubi\v cka is supported by the Center for Foundations of Modern Computer Science (Charles University project UNCE/SCI/004).}

\begin{abstract}
Tent and Ziegler proved that the automorphism group of the Urysohn sphere is simple and that the automorphism group of the Urysohn space is simple modulo bounded automorphisms. A key component of their proof is the definition of a stationary independence relation (SIR). In this paper we prove that the existence of a SIR satisfying some extra axioms is enough to prove simplicity of the automorphism group of a countable structure. The extra axioms are chosen with applications in mind, namely homogeneous structures which admit a ``metric-like amalgamation'', for example all primitive 3-constrained metrically homogeneous graphs of finite diameter from Cherlin's list.
\end{abstract}
\maketitle
\section{Introduction}
In 2011, Macpherson and Tent~\cite{Macpherson2011b} proved that the automorphism groups of \Fraisse{} limits of free amalgamation classes are simple. This was followed by two papers of Tent and Ziegler~\cite{Tent2013,Tent2013b} where they prove that the isometry group of the Urysohn space (the unique complete separable homogeneous metric space universal for all finite metric spaces) modulo bounded isometries (i.e. isometries $f$ with a finite bound on the distance between $x$ and $f(x)$) is simple and that the isometry group of the Urysohn sphere is simple. Later, Evans, Ghadernezhad and Tent~\cite{Evans2016} proved simplicity for automorphism groups of some Hrushovski constructions, and Li~\cite{Li2018} proved simplicity for the structures from Cherlin's list of 26 primitive triangle-constrained homogeneous structures with 4 binary symmetric relations (see appendix of~\cite{Cherlin1998}). 

More recently, Tent and Ziegler's method was generalised to asymmetric structures. Li~\cite{Li2019} proved that the automorphism groups of some of Cherlin's asymmetric structures in the appendix of~\cite{Cherlin1998} are simple. The same result for non-trivial linearly ordered free homogeneous structures has been proved independently by Calderoni, Kwiatkowska and Tent~\cite{calderoni2020simplicity} and Li~\cite{Li2020}. Also in~\cite{Li2020}, simplicity was proved for the automorphism groups of the universal $n$-linear orders for $n\geq 2$. Another recent example where (non-stationary) independence relations have been used to prove strong results about automorphism groups of structures is a paper by Kaplan and Simon~\cite{Kaplan2019}.

In this paper, we adapt the methods of Tent and Ziegler and prove the following theorem (definitions and examples will be given in the upcoming paragraphs).

\begin{theorem}
\label{thm:main}
Let $\F$ be a transitive countable relational structure with a bounded 1-supported metric-like stationary independence relation $\ind$. Then $\Aut(\F)$ is simple.
\end{theorem}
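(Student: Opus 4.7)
The plan is to adapt the Tent--Ziegler strategy used for the Urysohn sphere. Fix a non-trivial $g \in \Aut(\F)$; the goal is to show the normal closure of $g$ equals $\Aut(\F)$, ideally with a uniform bound $N$ (independent of the choice of $g$) such that every $h \in \Aut(\F)$ is a product of at most $N$ conjugates of $g^{\pm 1}$.

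First I would prove a \emph{moving lemma}: since $g \neq 1$ and $\F$ is transitive, there is $a \in \F$ with $a \neq g(a)$. The 1-supported hypothesis should then let me replace $g$ by a conjugate whose non-trivial effect is localised, i.e.\ witnessed on a single element together with an independent environment controlled by $\ind$. This reduces the behaviour of an arbitrary non-trivial $g$ to a manageable model configuration that can be transplanted around $\F$ via conjugation, analogously to the ``witness pairs'' in the Urysohn proof.

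Next I would prove a \emph{transfer lemma}: given any finite partial isomorphism $f \colon A \to B$ in $\F$ whose domain and range are suitably $\ind$-independent, realise $f$ as the restriction of a bounded product of conjugates of $g^{\pm 1}$. By stationarity of $\ind$, one can find $x \in \Aut(\F)$ moving the localised support of $g$ into a position independent from both $A$ and $B$; then $xgx^{-1}$ implements a piece of $f$, and a telescoping composition of a bounded number of such conjugates realises $f$. This is where the metric-like axiom is essential: it guarantees that independent amalgamations introduce no unintended relations, so the conjugates compose cleanly and the SIR ``triangle-like'' behaviour governs the output. The boundedness hypothesis is what promotes this into a uniform numerical control.

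The proof is then completed by a back-and-forth construction. Enumerate $\F = \{a_0, a_1, \dots\}$ and iteratively extend the partial isomorphism matching a given $h \in \Aut(\F)$ on $\{a_0, \dots, a_n\}$ using the transfer lemma, arranging each extension so that it affects only new, $\ind$-independent material and can be absorbed into the running product. The main obstacle is the transfer lemma: arranging the independent copies of the localised support of $g$ so that a \emph{short} composition of conjugates exactly realises the target partial isomorphism, with no interference between consecutive factors. The metric-like and 1-supported axioms are designed precisely to make this work, and the bulk of the technical effort will go into verifying their interaction with the SIR suffices to give a fixed constant $N$.
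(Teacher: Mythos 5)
Your sketch does not match the paper's proof, and as written it has a gap that would sink it. The paper proceeds by (i) invoking Tent--Ziegler's result (Theorem~\ref{thm:tz}) that any automorphism which moves every type over every finite set \emph{almost maximally} generates $\Aut(\F)$ with a uniform bound, and then (ii) devoting all of the work to showing that from an arbitrary non-identity $g$ one can manufacture, using a bounded product of conjugates of $g^{\pm1}$, an automorphism with that almost-maximal moving property. The manufacture happens in Proposition~\ref{prop:main}: first $\|\ind\|$ conjugates of $g$ give some $h$ and a point $a$ with $a\ind_\emptyset h(a)$ (Lemma~\ref{lem:exists1}, using geodesic sequences of length $\|\ind\|$); then Lemmas~\ref{lem:movesall1} and~\ref{lem:movesall} upgrade this to ``moves all types almost maximally or by distance~$1$''; and then the commutator pumping Lemma~\ref{lem:pump} shows $[g,h]$ doubles the distance by which types are moved, so after $\lceil\log_2\|\ind\|\rceil$ iterations \emph{Boundedness} forces an almost-maximal move. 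Your proposal skips this entire amplification phase.

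Concretely, the claim that ``the 1-supported hypothesis should then let me replace $g$ by a conjugate whose non-trivial effect is localised'' is the gap. Conjugation cannot localise the support of $g$: a non-identity automorphism of a metric-like structure can displace \emph{every} vertex, and no conjugate changes this. This is exactly the obstacle that distinguishes the metric setting from the free-amalgamation setting of Macpherson--Tent, where ``localised supports'' really do exist. Here, $1$-supportedness is not about the support of $g$ at all; it is an axiom about the independence relation (allowing a base $C$ to be replaced by a single point), and it is used technically inside Lemma~\ref{lem:movesall} and the case analysis of Lemma~\ref{lem:pump}. Similarly, your proposed ``transfer lemma'' (realising arbitrary finite partial isomorphisms as bounded products of conjugates of $g^{\pm1}$) is essentially the content of Tent--Ziegler's Theorem~\ref{thm:tz}, but that theorem \emph{requires} the almost-maximal moving hypothesis; starting from a raw non-identity $g$, it is false, and the whole point of the paper is to arrange it. Finally, Boundedness plays a sharp, identifiable role — once a geodesic sequence from $x$ to $g(x)$ has length $\geq\|\ind\|$ one gets $x\ind_\emptyset g(x)$ — rather than the vague ``uniform numerical control'' you ascribe to it. Without the geodesic-sequence machinery and the commutator pumping step, the argument does not get off the ground.
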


As direct corollaries of Theorem~\ref{thm:main}, we get the following two more concrete results, for which the definitions will be given in Section~\ref{sec:corollaries}.

\begin{theorem}\label{thm:semigroups}
Let $\ESemig$ be a finite archimedean \pocs{} with at least two elements and let $\F$ be a homogeneous $\Semig$-metric space which realises all distances. Assume that $\F$ admits an $\Semig$-shortest path independence relation $\ind$ and that $\ind$ is a 1-supported SIR. Then $\Aut(\F)$ is simple.
\end{theorem}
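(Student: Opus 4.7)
The strategy is to deduce Theorem~\ref{thm:semigroups} as a direct corollary of Theorem~\ref{thm:main} by verifying its hypotheses for the $\Semig$-metric space $\F$ equipped with its $\Semig$-shortest path independence relation $\ind$. That $\ind$ is a 1-supported SIR is part of the assumption, so only three items remain to check: $\F$ must be transitive and countable, and $\ind$ must be bounded and metric-like.

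Transitivity is immediate from homogeneity. In the signature of $\Semig$-metric spaces any two singletons are isomorphic substructures (they carry no structure beyond the distance of a point from itself, which is the semigroup identity). Homogeneity extends the isomorphism between any two singletons to an automorphism of $\F$, giving vertex-transitivity. Countability follows from $\Semig$ being finite: the language has finitely many binary distance predicates, there are only countably many isomorphism types of finite $\Semig$-metric spaces, and the Fra\"{\i}ss\'e limit over such a class is countable. Boundedness of $\ind$ should drop out of $\Semig$ being both finite and archimedean: only finitely many distance values are available, and archimedeanity rules out the unbounded iterated growth that the boundedness condition is designed to exclude.

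The substantive step, which I expect to be the main obstacle, is the verification that the $\Semig$-shortest path independence relation is metric-like. Intuitively, $a \ind_c b$ records that $d(a,b)$ has been assigned the largest value compatible with going through $c$, namely (a suitable truncation of) $d(a,c)\oplus d(c,b)$, and the metric-like axioms ought to express that independent extensions always default to this extremal distance. One therefore unpacks the axioms clause by clause and checks that the default distances they prescribe agree with the semigroup-sum prescription of the shortest-path relation. The key tools will be associativity and commutativity of $\oplus$, monotonicity of $\oplus$ with respect to $\mleq$, and archimedeanity to control iterated amalgamations, while finiteness of $\Semig$ removes any analytic subtleties. Once these conditions are established, Theorem~\ref{thm:main} applies directly and yields the simplicity of $\Aut(\F)$.
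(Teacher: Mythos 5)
Your high-level strategy matches the paper exactly: deduce Theorem~\ref{thm:semigroups} from Theorem~\ref{thm:main} by checking that $\F$ is transitive and that $\ind$ is metric-like and bounded (with 1-supportedness given). Transitivity from homogeneity is correct. However, the two substantive verifications --- that $\ind$ is metric-like and that $\ind$ is bounded --- are precisely where the content of the proof lies, and you leave both as promissory notes (``should drop out,'' ``one unpacks the axioms clause by clause'') rather than arguments.

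For \emph{boundedness}, the required idea is missing entirely. The paper's argument: since $\Semig$ is finite it has a maximum element, call it $1$; if $a\oplus b = a$ for some $a,b\in\Semig$, then by associativity $a\oplus(n\times b)=a$ for all $n$, and archimedeanity forces $a\mgeq c$ for every $c$, i.e.\ $a=1$. Hence for $a,b\in\Semig\setminus\{1\}$ one has $a\oplus b\mgt a$, so any $\oplus$-sum of $|\Semig|$ elements equals $1$. Along a geodesic sequence $a_0,\dots,a_{|\Semig|}$ the distance $d(a_0,a_{|\Semig|})$ is such a sum, hence equals $1 = \inf_\mleq\emptyset$, giving $a_0\ind_\emptyset a_{|\Semig|}$ and $\|\ind\|\leq|\Semig|$. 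This is not a triviality that ``drops out''; it is the place where finiteness and archimedeanity are actually used together, and you would need to produce it.

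For \emph{metric-like}, you correctly flag it as the main obstacle but don't resolve it. The three clauses each need a concrete check: (1) $a\nind_A a$ for $a\notin A$ because $d(a,a)=0$ while for any $c\in A$ one has $d(a,c)\oplus d(c,a)\mgeq d(a,c)\mgt 0$, so the infimum cannot be $0$; (2) some $a\neq b$ with $a\nind_\emptyset b$ exists because $\Semig$ has at least two elements, $\F$ realises all distances, and $\inf_\mleq\emptyset$ is the maximum; (3) Perfect triviality holds because if enlarging $C$ to $C'=C\cup\{c'\}$ dropped the infimum below $d(a,b)$, the triple $a,b,c'$ would violate the triangle inequality. Note also a small slip: you write that the distance from a point to itself is ``the semigroup identity,'' but a non-trivial archimedean $\Semig$ has no identity; $0$ is an external symbol adjoined for notational convenience. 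None of this is beyond reach, but as written the proposal identifies the targets without hitting them.
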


\begin{theorem}\label{thm:cherlin}
If $G$ is a countably infinite metrically homogeneous graph which corresponds to one of the primitive 3-constrained finite-diameter classes from Cherlin's catalogue~\cite{Cherlin2011b}, then $\Aut(G)$ is simple.
\end{theorem}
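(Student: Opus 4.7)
The plan is to derive Theorem~\ref{thm:cherlin} as a direct instance of Theorem~\ref{thm:semigroups}. Given a primitive 3-constrained metrically homogeneous graph $G$ of finite diameter corresponding to Cherlin's parameters $(\delta, K_1, K_2, C_1, C_2)$, the first task is to package these parameters into a suitable finite \pocs{} $\ESemig$. I would take the underlying set $M = \{1, 2, \ldots, \delta\}$ with $\mleq$ the usual order on integers, and define $k \oplus l$ by integer addition truncated at $\delta$, appropriately adjusted so that the triangle conditions encoded by $K_1, K_2, C_1, C_2$ become exactly the triangle inequalities of $\oplus$. Finiteness forces the archimedean condition automatically. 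Under this translation, the Fra\"iss\'e class associated with $G$ coincides with the class of finite $\Semig$-metric spaces: being 3-constrained means that the only forbidden substructures are triangles, and the parameters control precisely which triangles are excluded. Hence $G$ is the homogeneous $\Semig$-metric space realising all distances.

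Second, I would introduce the shortest-path independence relation on $G$: for finite $A, B \subseteq G$ with common part $C$, set $A \ind_C B$ if $d(a, b) = \min_{c \in C}(d(a, c) \oplus d(c, b))$ for every $a \in A$ and $b \in B$. Verifying the axioms of an $\Semig$-shortest path SIR reduces to showing that the shortest-path amalgam of two finite $\Semig$-metric spaces over a common substructure lies in the Fra\"iss\'e class of $G$. Any new triangle in the amalgam has vertices $a \in A$, $b \in B$, $c \in C$ with the $ab$-edge labelled by the shortest-path distance, so $d(a,b) \mleq d(a,c) \oplus d(c,b)$ with equality for some $c$, and the triangle conditions already satisfied inside $A \cup C$ and $B \cup C$ prevent any Cherlin-forbidden triangle from arising. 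The metric-like property falls out of this construction, and 1-supportedness holds because each newly determined distance is witnessed by a single vertex in $C$.

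The main obstacle is the uniform verification that no forbidden Cherlin configuration appears in the shortest-path amalgam across the entire parameter range of the catalogue. One must check, family by family, that the parity-based constraints and the large-perimeter constraints $C_1, C_2$ are never violated by the shortest-path distance assignment; this relies crucially on primitivity to exclude the degenerate bipartite and antipodal cases, on the finite-diameter assumption to keep the truncation in $\oplus$ meaningful, and on 3-constrainedness to reduce everything to triangles. Once this case analysis (which essentially mirrors Cherlin's own amalgamation arguments) has been carried out, all hypotheses of Theorem~\ref{thm:semigroups} are in place and the simplicity of $\Aut(G)$ follows at once.
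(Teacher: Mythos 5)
Your overall strategy matches the paper's: package the Cherlin parameters $(\delta,K_1,K_2,C_1,C_2)$ into a finite archimedean \pocs{} $\Semig$, identify the Fra\"iss\'e class of $G$ with the class of finite $\Semig$-metric spaces, check that the $\Semig$-shortest path independence relation is a 1-supported SIR, and invoke Theorem~\ref{thm:semigroups}. But the paper does not itself carry out this construction; it delegates every nontrivial step to prior work: the existence of SIRs for these classes to Aranda et al., the $\Semig$-metric space interpretation with finite archimedean $\Semig$ to Kone\v cn\'y's semigroup framework, and 1-supportedness to a separate result of Hubi\v cka, Kompatscher and Kone\v cn\'y. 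You instead attempt to reconstruct these facts directly, and the reconstruction has real gaps.

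The most serious one is your choice of \pocs{}. Taking $M=\{1,\ldots,\delta\}$ with $\mleq$ the usual linear order and $\oplus$ truncated addition ``appropriately adjusted'' cannot work as stated: the Cherlin constraints are parity-sensitive (small odd perimeters and large perimeters with parity-dependent thresholds), and a linear order on the integers $1,\ldots,\delta$ cannot encode those constraints via a single comparison $d_1\mleq d_2\oplus d_3$. Indeed, the paper explicitly observes just before Section~4.2 that if $\mleq$ is linear then the shortest-path independence relation is \emph{automatically} 1-supported; if the metrically-homogeneous case used a linear $\mleq$, the paper would not need to cite a separate result for 1-supportedness. That it does is strong evidence that the relevant $\mleq$ is a genuinely non-linear partial order, and your ``appropriate adjustment'' hides exactly the work of designing it so that $d_1\mleq d_2\oplus d_3$ characterizes the allowed triangles across all five parameters.

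The second gap is your justification of 1-supportedness: ``each newly determined distance is witnessed by a single vertex in $C$'' is true only when the $\mleq$-infimum is attained by a single element, which is automatic for linear orders but false in general (Example~1.12 in the paper is precisely a $k$-supported shortest-path SIR that is not $(k-1)$-supported, arising from a non-linear product order). For metrically homogeneous graphs, 1-supportedness is a nontrivial theorem, not a by-product of the construction. A smaller issue: ``finiteness forces the archimedean condition automatically'' is not a valid general statement about finite commutative semigroups (a non-maximal idempotent kills archimedeanity); it happens to hold here, but for a structural reason, not merely because $\Semig$ is finite. Finally, you acknowledge that a family-by-family case analysis of the forbidden configurations remains to be done, ``mirroring Cherlin's own amalgamation arguments''; that case analysis is exactly what the cited results supply, so as written the proposal does not close the argument.
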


\subsection{Stationary independence relations}
The notion of stationary independence relations (Definition~\ref{SIR}) was developed by Tent and
Ziegler in their paper on the Urysohn space~\cite{Tent2013}.
It has several generalisations (e.g. for structures with closures~\cite{Evans2016}), but for our purposes the original variant suffices.

Let $\F$ be a relational structure and let $A,B\subseteq \F$ be finite subsets.
We will identify them with the substructures induced by $\F$ on $A$ and $B$ respectively and by $AB$ we will denote the union $A\cup B$ (and hence also the substructure induced by $\F$ on $AB$). If the set $A=\{a\}$ is singleton, we may write $a$ instead of $\{a\}$. Uppercase letters will denote sets while lowercase will denote the elements of the structure, which we call \emph{vertices} owing to the combinatorial background of part of the authors. As is usual in this area, if $A\subseteq \F$, we sometimes assume that it has some implicit enumeration. This is clear from the context and should not cause any confusion.

Let $A, X\subseteq \F$. By the \emph{type of $A$ over $X$} (denoted by $\tp(A/X)$) we mean the orbit of $A$ under the action of the stabilizer subgroup of $\Aut(\F)$ with respect to $X$. If $p=\tp(A/X)$, we say that $B\subseteq\F$ \emph{realises} $p$ (and denote it as $B\models p$) if $B$ lies in $p$, in other words, if there is an automorphism of $\F$ fixing $X$ pointwise which maps $A$ to $B$. To simplify the notation, we write $\tp(A)$ for $\tp(A/\emptyset)$. Our types correspond to realised types in a (strongly) homogeneous structure in the standard model-theoretic terminology. In fact, we may assume that the language is chosen so that $\F$ is homogeneous, that is, partial automorphisms between finite substructures of $\F$ extend to automorphisms.

\begin{definition}[Stationary Independence Relation]
\label{SIR}
Let $\mathbb{F}$ be a relational structure.  A ternary relation $\ind$ on finite subsets of $\mathbb{F}$ is called a \emph{stationary independence
relation} (\emph{SIR}, with $A\ind_{C}B$ being pronounced ``$A$ is independent from $B$ over $C$'') if the following conditions are satisfied:
\begin{enumerate}[label=SIR\arabic*]
 \item\label{invariance} \emph{(Invariance)}. The independence of finite subsets of $\mathbb{F}$ only depends on their type. In particular, for 
every
automorphism $f$ of $\mathbb{F}$, we have $A\ind_{C}B$ if and only if
$f(A)\ind_{f(C)}f(B)$.
 \item\label{symmetry}\emph{(Symmetry)}. If $A\ind_C B$, then $B\ind_C A$.
\item\label{monotonicity} \emph{(Monotonicity)}. If $A\ind_{C}BD$, then
$A\ind_{C}B$ and $A\ind_{BC}D$.
\item\emph{(Existence)}. \label{existence} For every $A,B$ and $C$ in $\mathbb{F}$, there
is some
$A'\models \tp(A/C)$ with $A'\ind_{C}B$. 
\item\emph{(Transitivity)} \label{transitivity} If $A\ind_C B$ and $A\ind_{BC} B'$, then $A\ind_C B'$.
\item\emph{(Stationarity)} \label{stationarity} If $A$ and $A'$ have the same
type over $C$ and are both independent over $C$ from some set $B$ then they
also have the same type over $BC$. 
\end{enumerate}
\end{definition}
Note that by an observation of Baudisch~\cite{Baudisch2016}, these axioms are redundant as Monotonicity can be derived from the rest of them. Stationary independence relations correspond to ``canonical amalgamations'' by putting $A\ind_C B$ if and only if the canonical amalgamation of $AC$ and $BC$ over $C$ is isomorphic to $ABC$. The notion of canonical amalgamations can be formalised, see~\cite{Aranda2017}.

To make our proofs shorter, we will sometimes use Symmetry, Monotonicity and Existence implicitly. The following observation which follows from Invariance will be useful later.
\begin{observation}\label{obs:invariance}
If $\F$ is a relational structure, $\ind$ a SIR on $\F$ and $A\ind_C B$, then $A\ind_C BC$.
\end{observation}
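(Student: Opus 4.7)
The plan is to use Existence to produce a ``witness copy'' of $A$ that is already independent from $BC$ over $C$, then transfer this property back to $A$ via Stationarity and Invariance. The key insight is that Stationarity allows us to identify any two realisations of $\tp(A/C)$ that are independent from $B$ over $C$, so an automorphism produced this way will move our witness copy to $A$ while fixing $BC$.

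In detail, I would proceed as follows. First, apply Existence (\ref{existence}) to obtain some finite set $A' \subseteq \F$ with $A' \models \tp(A/C)$ and $A' \ind_C BC$. Next, since $B \subseteq BC$, Monotonicity (\ref{monotonicity}) gives $A' \ind_C B$. Meanwhile, by hypothesis $A \ind_C B$, so both $A$ and $A'$ realise $\tp(A/C)$ and are independent from $B$ over $C$. Stationarity (\ref{stationarity}) then yields $\tp(A/BC) = \tp(A'/BC)$, so there is an automorphism $f \in \Aut(\F)$ fixing $BC$ pointwise with $f(A') = A$. In particular $f$ fixes $C$ and $BC$ setwise, so Invariance (\ref{invariance}) applied to $A' \ind_C BC$ delivers $A \ind_C BC$, as desired.

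I do not expect any real obstacle here: the statement is a one-line consequence of the axioms, and the only thing to watch is keeping the roles of $B$ and $BC$ straight when invoking Monotonicity. It is worth noting that the argument actually shows a slightly more general fact by the same template — if $A \ind_C B$ and $D \subseteq BC$, then $A \ind_C D$ — but for the observation as stated it suffices to take $D = BC$.
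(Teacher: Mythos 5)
Your proof is correct, and every step checks out: Existence (applied with $BC$ in place of $B$) produces $A' \models \tp(A/C)$ with $A' \ind_C BC$, Monotonicity peels this down to $A' \ind_C B$, Stationarity combined with the hypothesis $A \ind_C B$ then gives $\tp(A/BC) = \tp(A'/BC)$, and Invariance transports $A' \ind_C BC$ across the resulting automorphism fixing $BC$ pointwise. The generalisation you remark on (to any $D \subseteq BC$) also goes through by the same template.

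The paper itself does not spell out a proof; it simply asserts that the observation ``follows from Invariance.'' The most plausible reading of that claim is that the phrase ``the independence of finite subsets only depends on their type'' is being taken in the informal sense that $A \ind_C B$ is determined by the structure induced on the underlying set $A \cup B \cup C$ together with the identification of $A$ and $C$, in which case replacing $B$ by $BC$ literally changes nothing. Your argument is therefore a genuinely different route: instead of appealing to that informal reading, you derive the statement rigorously from the axioms as literally stated, at the cost of invoking Existence, Monotonicity, and Stationarity in addition to Invariance. This is arguably the more satisfying proof, since the axiom as written only guarantees invariance under automorphisms applied to the whole triple, and does not by itself say that one may silently absorb $C$ into the right-hand argument. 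Either way, the observation is established; your version simply makes explicit the bookkeeping that the paper elides.
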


\begin{definition}[$k$-supported SIR]
Let $k$ be a positive integer. We say that a SIR $\ind$ is \emph{$k$-supported} if for every $a,b,C$ such that $a\ind_C b$ there is $C'\subseteq C$ such that $\vert C'\vert\leq k$ and $a\ind_{C'} b$.
\end{definition}

\begin{observation}
For $k=1$, $k$-supportedness is equivalent to:

\emph{(1-supportedness)} If $a\ind_C b$ and $C = C_1\cup C_2$ then $a\ind_{C_1} b$ or $a\ind_{C_2} b$.
\end{observation}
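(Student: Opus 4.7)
The observation is an equivalence, so I split the proof into two directions.

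For \emph{(1-supportedness)} $\Rightarrow$ the defining condition of 1-supported, I use induction on $|C|$. If $|C|\leq 1$, set $C' = C$. For $|C|\geq 2$, pick any $c \in C$ and apply \emph{(1-supportedness)} to the partition $C = \{c\}\cup(C\setminus\{c\})$: either $a\ind_c b$, in which case $C' = \{c\}$ works, or $a\ind_{C\setminus\{c\}} b$, in which case the inductive hypothesis applied to this smaller independence yields a suitable $C'\subseteq C\setminus\{c\}\subseteq C$.

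For the reverse direction, suppose $\ind$ is 1-supported and $a\ind_C b$ with $C = C_1\cup C_2$. By 1-supportedness there is $C' \subseteq C$ with $|C'|\leq 1$ and $a\ind_{C'} b$; since $|C'|\leq 1$, we may assume $C'\subseteq C_1$. The task is to deduce $a\ind_{C_1} b$. My plan is to use Existence to obtain $a^*$ with $a^*\equiv_{C'} a$ and $a^*\ind_{C'} bC_1$; iterated Monotonicity then yields $a^*\ind_{C_1} b$, $a^*\ind_{C'} b$, and $a^*\ind_{C'b}(C_1\setminus C')$. A first Stationarity application (using $a\equiv_{C'} a^*$, $a\ind_{C'} b$, $a^*\ind_{C'} b$) gives $a\equiv_{C'b} a^*$. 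A second application, with base $C'b$ and right-hand side $C_1\setminus C'$, would lift this to $a\equiv_{C_1 b} a^*$; combined with $a^*\ind_{C_1} b$ and Invariance this yields the desired $a\ind_{C_1} b$.

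The main obstacle is securing $a\ind_{C'b}(C_1\setminus C')$ for the second Stationarity step. While $a\ind_C b$ readily gives $a\ind_{Cb}(C_1\setminus C')$ via Observation~\ref{obs:invariance} and Monotonicity, shrinking the base from $Cb$ to $C'b$ is not an automatic consequence of the SIR axioms, as base-reduction is not generically admissible. I expect to handle this by invoking the 1-supportedness hypothesis on the derived independence $a\ind_{Cb}(C_1\setminus C')$, extracting a singleton support, and then aligning it with the previously obtained type-equality $a\equiv_{C'b} a^*$ through a further Stationarity argument together with Transitivity. This alignment is the most delicate point and is where I expect the bulk of the technical work to lie.
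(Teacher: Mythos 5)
Your forward direction (the partition form implies the $k=1$ support form) is correct: the induction on $|C|$, splitting off one element at a time, is exactly the right argument.

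The reverse direction is where the proposal has a genuine gap, and you correctly identify it yourself. Your plan hinges on obtaining $a\ind_{C'b}(C_1\setminus C')$, but from $a\ind_C b$ the SIR axioms only give $a\ind_{Cb}(C_1\setminus C')$, and shrinking the base from $Cb$ to $C'b$ is precisely base monotonicity, which is not a SIR axiom. Your proposed remedy (apply $1$-supportedness again to $a\ind_{Cb}(C_1\setminus C')$, then ``align'' via Stationarity and Transitivity) does not close the gap: the singleton support you extract could be any element of $Cb$, and there is no mechanism to force it to lie in $C'b$, nor to recombine the resulting independences into the needed one. The Existence/Stationarity step you carry out does correctly give $\tp(a/C'b)=\tp(a^*/C'b)$, but the lift from $C'b$ to $C_1 b$ is exactly where the unproven base reduction is hiding.

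The intended reading is much shorter: once Perfect triviality (Definition~\ref{defn:metriclike}) is available, $a\ind_{C'} b$ with $C'\subseteq C_1$ gives $a\ind_{C_1} b$ immediately, so the reverse direction is a one-liner. Although the Observation is stated before Perfect triviality is introduced, every use of $1$-supportedness in the paper (Lemmas~\ref{lem:movesall} and~\ref{lem:pump}) is in the metric-like setting where Perfect triviality holds, so this suffices. If the equivalence were claimed for arbitrary SIRs, an additional argument (or an additional hypothesis) would be required; the machinery you set up does not supply it.
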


We say that a structure $\F$ is \emph{transitive} if $\tp(a)=\tp(b)$ for every $a,b\in \F$.

\begin{definition}[Metric-like SIR]\label{defn:metriclike}
Let $\F$ be a relational structure with a SIR $\ind$. We say that $\ind$ is \emph{metric-like} if the following conditions are satisfied:
\begin{enumerate}
\item\label{metriclike_weird} If $a\notin A$, then $a\nind_A a$.
\item For every $a\in \F$ there is $b\in \F$ such that $a\neq b$ and $a\nind_\emptyset b$.
\item \emph{(Perfect triviality)}  If $A\ind_C B$ and $C\subseteq C'$ then $A\ind_{C'} B$.
\end{enumerate}
\end{definition}


\begin{lemma}\label{lem:metricity}
Let $\F$ be a relational structure with a SIR $\ind$ which satisfies Perfect triviality. Then $\ind$ satisfies
\begin{enumerate}
\item\emph{(Metricity)} If $A\ind_{C_1C_2} B$ and $C_1\ind_D B$ then $A\ind_{C_2D} B$.
\item\emph{(Triviality)} If $A\ind_B C$ and $A\ind_B D$ then $A\ind_B CD$.
\end{enumerate}
\end{lemma}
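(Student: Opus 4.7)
The plan is to handle the two statements separately. Both reduce fairly quickly to the SIR axioms once Perfect triviality is used to bring the various bases into alignment; Metricity will fall out by a single application of Transitivity, while Triviality will require the slightly more involved Existence--plus--Stationarity trick.

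For \emph{Metricity}, starting from $A\ind_{C_1C_2}B$ and $C_1\ind_D B$, the first step is to enlarge both bases by Perfect triviality: since $D\subseteq C_2D$ and $C_1C_2\subseteq C_1C_2D$, we obtain $C_1\ind_{C_2D}B$ and $A\ind_{C_1C_2D}B$. By Symmetry these become $B\ind_{C_2D}C_1$ and $B\ind_{C_1C_2D}A$. Transitivity applied on the right, with common base $C_2D$ and auxiliary set $C_1$, then gives $B\ind_{C_2D}A$, and a final use of Symmetry yields $A\ind_{C_2D}B$.

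For \emph{Triviality}, suppose $A\ind_B C$ and $A\ind_B D$; we must place $CD$ on the right in a single independence statement. By Existence, pick $A'\models\tp(A/B)$ with $A'\ind_B CD$. Monotonicity gives $A'\ind_B D$ and $A'\ind_{BD}C$. Since $A$ and $A'$ agree over $B$ and are both independent from $D$ over $B$, Stationarity yields $\tp(A/BD)=\tp(A'/BD)$. Applying Perfect triviality to $A\ind_B C$ produces $A\ind_{BD}C$, so now $A$ and $A'$ agree over $BD$ and are both independent from $C$ over $BD$; a second use of Stationarity gives $\tp(A/BCD)=\tp(A'/BCD)$. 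Invariance then transfers $A'\ind_B CD$ to $A\ind_B CD$, as required.

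The main obstacle I expect is Triviality, because none of the axioms directly combine two independence statements $A\ind_B C$ and $A\ind_B D$ into $A\ind_B CD$: Monotonicity runs in the opposite direction, and Transitivity as stated only reproduces one of the two factors on the right. The standard way around this in the SIR setting is the Existence--plus--Stationarity argument above, which copies the desired independence from a generic witness $A'$ back to $A$. Metricity, by contrast, is essentially just Transitivity once Perfect triviality has harmonised the bases, so I anticipate no real difficulty there.
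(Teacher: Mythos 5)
Your proof of \emph{Metricity} follows the same route as the paper: Perfect triviality is used to lift both hypotheses to the common base $C_1C_2D$, after which a single application of Transitivity (with Symmetry to align the sides) finishes. The paper leaves the Symmetry step implicit, but the argument is identical.

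Your proof of \emph{Triviality} is correct, but it takes a genuinely different and somewhat heavier route than the paper's. The paper stays entirely on the ``direct'' side: from $A\ind_B C$, Perfect triviality gives $A\ind_{BC} D$; then Observation~\ref{obs:invariance} allows the base to be absorbed into the right-hand side, giving $A\ind_{BC} BCD$, which Monotonicity trims to $A\ind_{BC} CD$; finally Transitivity with $A\ind_B C$ collapses the base back to $B$, yielding $A\ind_B CD$. No auxiliary realisation and no use of Existence or Stationarity. Your argument instead imports the standard Existence--Stationarity ``transfer'' device: produce a generic witness $A'\models\tp(A/B)$ with $A'\ind_B CD$, show in two Stationarity steps that $\tp(A/BCD)=\tp(A'/BCD)$, and pull the independence back by Invariance. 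This is a perfectly valid and more widely applicable template (it is how one typically proves such identities in the absence of Perfect triviality), but in this lemma it uses more machinery than necessary; the paper's version is the one that reveals that Triviality is really just a Monotonicity--Transitivity consequence once Perfect triviality enlarges the base. Your paragraph predicting that Triviality would be the harder direction is thus slightly off: with Observation~\ref{obs:invariance} in hand, the direct route is about as short as the Metricity argument.
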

\begin{proof}
First assume that $A\ind_{C_1C_2} B$ and $C_1\ind_D B$. By Perfect triviality, $C_1\ind_{C_2D} B$ and $A\ind_{C_1C_2D} B$. Using Transitivity it follows that $A\ind_{C_2D}B$, which proves Metricity.

Now assume that $A\ind_B C$ and $A\ind_B D$. By Perfect triviality we get $A\ind_{BC} D$ and by Observation~\ref{obs:invariance} and Monotonicity it then follows that that $A\ind_{BC} CD$. Using Transitivity together with $A\ind_B C$ then implies $A\ind_B CD$.
\end{proof}
In fact, Metricity is equivalent to Perfect triviality if $\ind$ is a SIR. The following is a simple corollary of Triviality which will be useful later.
\begin{corollary}\label{cor:triviality}
If $a\ind_\emptyset x$ for every $x\in X$, then $a\ind_\emptyset X$.
\end{corollary}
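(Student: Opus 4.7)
The plan is a straightforward induction on $|X|$, using the Triviality clause of Lemma~\ref{lem:metricity} as the engine.

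For the base case $|X|=1$, say $X=\{x\}$, the conclusion $a\ind_\emptyset X$ is exactly the hypothesis $a\ind_\emptyset x$. (If one cares about $X=\emptyset$, note that $a\ind_\emptyset \emptyset$ follows by applying Existence with $B=C=\emptyset$ and then Invariance.)

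For the inductive step, enumerate $X=\{x_1,\dots,x_n\}$ with $n\geq 2$, and let $Y=\{x_1,\dots,x_{n-1}\}$. By the induction hypothesis applied to $Y$, we have $a\ind_\emptyset Y$. By the given hypothesis applied to $x_n$, we have $a\ind_\emptyset x_n$. Now apply Triviality (Lemma~\ref{lem:metricity}(2)) with $A=\{a\}$, $B=\emptyset$, $C=Y$ and $D=\{x_n\}$ to conclude $a\ind_\emptyset Y\cup\{x_n\}=X$, as required.

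I do not anticipate a real obstacle here: the statement is essentially a repeated application of the binary closure property (Triviality) to extend one-sided independence from singletons to finite unions, and Triviality has already been derived from the SIR axioms plus Perfect triviality in the preceding lemma. The only tiny care needed is to phrase the inductive step so that the two sets on the right-hand side of the Triviality application are written as ordinary subsets (with possible overlap harmless, since $a\ind_\emptyset(Y\cup\{x_n\})$ is what we want regardless of whether $x_n\in Y$).
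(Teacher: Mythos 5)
Your proof is correct and is exactly the intended argument: the paper states the corollary without proof, calling it ``a simple corollary of Triviality,'' and your induction on $|X|$ with the Triviality clause of Lemma~\ref{lem:metricity} as the inductive engine is that simple corollary spelled out.
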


\begin{definition}[Geodesic sequence]
Let $\F$ be a relational structure with a SIR $\ind$. We say that a sequence $a_1,\ldots,a_n\in \F$ of pairwise distinct vertices of $\F$ is \emph{geodesic} if for every $1\leq i<j<k\leq n$ it holds that $a_i\ind_{a_j}a_k$.
\end{definition}

\begin{definition}\label{defn:bounded}
Let $\F$ be a relational structure with a SIR $\ind$. We say that $\ind$ is \emph{bounded} if it satisfies

\medskip

\emph{(Boundedness)} There exists an integer $k_0$ such that if $a_0,\ldots,a_k$ is a geodesic sequence with $k \geq k_0$, then $a_0\ind_\emptyset a_k$.

\medskip

We denote the smallest such $k_0$ by $\|\ind\|$.
\end{definition}

The reader is encouraged to have the following examples in mind when reading this paper.
\begin{example}
Let $\F$ be the \Fraisse{} limit of all finite metric spaces using only distances $\{0,1,\ldots,n\}$ for some fixed $n\geq 2$ (clearly, one can view a metric space as a relational structure by introducing a binary relation for every distance). Define $\ind$ on $\F$ by putting $A\ind_C B$ if and only if for every $a\in A$ and every $b\in B$ it holds that $d(a,b) = \min(\{n\}\cup \{d(a,c)+d(b,c) : c\in C\})$. It is straightforward to check that $\ind$ is a bounded 1-supported metric-like SIR with $\|\ind\| = n$. 
\end{example}

For the Urysohn sphere, the only axiom which we do not have at hand is, paradoxically, Boundedness.
\begin{example}\label{ex:urysohn}
Let $\mathbb U_1$ be the Urysohn sphere, that is, the unique homogeneous separable complete metric space with distances from $[0,1]$ which is universal for all finite metric spaces with distances from $[0,1]$. We will denote its metric by $d$. Define the relation $\ind$ on finite subsets of $\mathbb U_1$ by putting $A\ind_C B$ if and only if for every $a\in A$ and every $b\in B$ it holds that $d(a,b) = \min(\{1\} \cup \{d(a,c)+d(b,c) : c\in C\})$. One can check that $\ind$ is a $1$-supported metric-like SIR, but does not satisfy Boundedness, as for every $k$ one can find a geodesic sequence with $k+1$ vertices such that the distance of every consecutive pair of them is smaller that $\frac{1}{k}$.
\end{example}



\begin{example}[$k$-supported metric-like SIR]\label{ex:non1sup}
Let $k\geq 1$ and $n\geq 3$ be integers. Put $S=\{1,\ldots,n\}^k\cup \{0\}^k$, let $A$ be a set and let $d\colon A^2\to S$ be a function. Let $\mleq$ be the product order on $S$ (i.e. $(a_1,\ldots,a_k)\mleq (b_1,\ldots,b_k)$ if and only if $a_i\leq b_i$ for every $1\leq i\leq k$) and let $\oplus$ be the component-wise addition on $S$ capped at $n$ (i.e. $(a_1,\ldots,a_k)\oplus (b_1,\ldots,b_k) = (c_1,\ldots,c_k)$, where $c_i=\min(n, a_i+b_i)$ for every $1\leq i\leq k$). 

We say that $(A,d)$ is an $[n]^k$-metric space if the following holds for every $x,y,z\in A$:
\begin{enumerate}
\item $d(x,y)=d(y,x)$,
\item $d(x,y) = (0,\ldots,0)$ if and only if $x=y$,
\item $d(x,z) \mleq d(x,y)\oplus d(y,z)$.
\end{enumerate}

One can verify that the class of all finite $[n]^k$-metric spaces is a \Fraisse{} class. Consider the structure $\mathbb M_k=(M_k,d)$, which is the \Fraisse{} limit of the class of all $[n]^k$-metric spaces, and define $\ind$ on $\mathbb M_k$ by putting $A\ind_C B$ if and only if for every $a\in A$ and every $b\in B$ it holds that $d(a,b)=\inf_\mleq \{d(a,c)\oplus d(c,b) : c\in C\}$. As $\mleq$ has a maximum, the infimum of the empty set is $(n,\ldots,n)$.

It is easy to verify that $\ind$ is a bounded metric-like SIR. Moreover, it is $k$-supported, but not $k'$-supported for any $k'<k$, which is witnessed by vertices $a,b, c_1,  \ldots, c_k\in \mathbb M_k$ such that $a\ind_{\{c_1,\ldots, c_k\}} b$, $d(a, c_i) = (1,\ldots,1)$ for every $i$ and $d(b, c_i)$ is equal to $1$ on the $i$-th coordinate and equal to $2$ everywhere else.
\end{example}

\section{Geodesic sequences}
In this section we prove some auxiliary results about geodesic sequences which will be used later. Fix a transitive relational structure $\F$ with a metric-like SIR $\ind$.

\begin{lemma}\label{lem:geodesic}
Let $a_1, \ldots, a_n$ be a geodesic sequence of vertices of $\F$ and let $b\in \F\setminus \{a_n\}$. Then there is $a_{n+1}\models \tp(b/a_n)$ such that $a_1,\ldots,a_{n+1}$ is a geodesic sequence.
\end{lemma}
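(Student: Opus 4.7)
My plan is to construct $a_{n+1}$ in one shot using Existence, choosing it so that it mimics $b$'s type over $a_n$ while being as independent as possible from the earlier terms of the sequence; then verify the new geodesic triples one by one, pushing the base of the independence downward through the sequence using Perfect triviality and Metricity (from Lemma~\ref{lem:metricity}).

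Concretely, I would first apply Existence to find $a_{n+1}\models \tp(b/a_n)$ with $a_{n+1}\ind_{a_n}\{a_1,\ldots,a_{n-1}\}$. Distinctness of $a_{n+1}$ from the previous vertices is then automatic: $a_{n+1}\ne a_n$ because $b\ne a_n$ and Invariance, while $a_{n+1}\ne a_i$ for $i<n$ follows from the first clause of Definition~\ref{defn:metriclike} (if $a_{n+1}=a_i$, then $a_i\ind_{a_n}a_i$ contradicts $a_i\nind_{a_n}a_i$, which holds since $a_i\ne a_n$).

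It remains to check that $a_i\ind_{a_j}a_{n+1}$ for all $1\le i<j\le n$. When $j=n$, this is immediate from the construction by Monotonicity and Symmetry. When $j<n$, the plan is as follows: Symmetry and Monotonicity give $a_{n+1}\ind_{a_n}a_i$, and Perfect triviality lets me enlarge the base to obtain $a_{n+1}\ind_{a_j a_n}a_i$. The original geodesic hypothesis gives $a_i\ind_{a_j}a_n$, equivalently $a_n\ind_{a_j}a_i$. Then Metricity applied with $A=a_{n+1}$, $B=a_i$, $C_1=a_n$, $C_2=D=a_j$ collapses the base back to $a_j$, yielding $a_{n+1}\ind_{a_j}a_i$; Symmetry finishes the job.

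I do not expect a real obstacle here: the proof is really a bookkeeping exercise. The only mild subtlety is making sure that Metricity can be invoked to \emph{remove} the unwanted parameter $a_n$ from the base, and that is exactly what the triangular pattern of the geodesic hypothesis $a_i\ind_{a_j}a_n$ provides. The metric-like axioms do not appear to be used beyond Perfect triviality (for Metricity) and the first clause (for distinctness), and transitivity of $\F$ plays no role in this lemma.
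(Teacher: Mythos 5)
Your proof is correct and follows the paper's route: choose $a_{n+1}$ via Existence to be independent from $\{a_1,\ldots,a_{n-1}\}$ over $a_n$, then for $j<n$ enlarge the base by Perfect triviality and collapse it back to $a_j$ using the geodesic hypothesis $a_i\ind_{a_j}a_n$. The only cosmetic difference is that you route through the derived Metricity property where the paper applies Transitivity directly (taking $A=a_i$, $C=a_j$, $B=a_n$, $B'=a_{n+1}$), and you spell out the easy distinctness check that the paper leaves implicit.
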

\begin{proof}
Using Existence, pick $a_{n+1}\models\tp(b/a_n)$ such that $a_1\cdots a_{n-1} \ind_{a_n} a_{n+1}$. Consider any $1\leq i < j \leq n-1$. By Monotonicity, $a_i\ind_{a_n}a_{n+1}$ and hence, by Perfect triviality, $a_i\ind_{a_ja_n} a_{n+1}$. Since $a_1,\ldots,a_n$ is a geodesic sequence, we know that $a_i\ind_{a_j}a_n$. Transitivity now implies that $a_i\ind_{a_j}a_{n+1}$ and hence $a_1,\ldots,a_{n+1}$ is a geodesic sequence.
\end{proof}

\begin{lemma}\label{lem:exists1_aux}
Let $a,b,c\in \F$ be distinct such that $a\ind_\emptyset b$. There is a geodesic sequence $a_0, a_1, \ldots, a_n\in \F$ satisfying the following:
\begin{enumerate}
\item $a=a_0$ and $b=a_n$, and
\item for every $0\leq i\leq n-1$ it holds that $\tp(a_ia_{i+1}) = \tp(ac)$,
\item $n = \|\ind\|$.
\end{enumerate}
\end{lemma}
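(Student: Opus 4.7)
The plan is to build a long geodesic sequence starting at $a$ whose consecutive pairs all realise $\tp(ac)$, and then use Boundedness and Stationarity to bend its endpoint onto $b$. Concretely, starting from the trivially geodesic singleton $a_0 = a$, I would iteratively invoke Lemma~\ref{lem:geodesic} to extend the sequence one vertex at a time, $\|\ind\|$ times in total. At the $i$-th step a geodesic $a_0, \ldots, a_i$ is in hand; since $\F$ is transitive (and, as noted, homogeneous) there is an automorphism $g$ with $g(a_0) = a_i$, and $g(c) \neq a_i$ because $c \neq a$, so Lemma~\ref{lem:geodesic} supplies a vertex $a_{i+1} \models \tp(g(c)/a_i)$ with $a_0, \ldots, a_{i+1}$ still geodesic. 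The identity $\tp(a_i a_{i+1}) = \tp(a_i\, g(c)) = g(\tp(a_0 c)) = \tp(ac)$ is then automatic.

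After $n := \|\ind\|$ iterations, the sequence $a_0, a_1, \ldots, a_n$ has property (2) of the lemma and, by Boundedness, $a_0 \ind_\emptyset a_n$. Now I invoke Stationarity: transitivity of $\F$ gives $\tp(a_n) = \tp(a) = \tp(b)$, while the hypothesis $a \ind_\emptyset b$ together with Symmetry and the previous sentence show that both $a_n$ and $b$ are $\ind_\emptyset$-independent from $a_0 = a$; hence $\tp(a_n/a_0) = \tp(b/a_0)$. By homogeneity there is an automorphism $f$ fixing $a_0$ and sending $a_n$ to $b$; applying $f$ to the whole sequence and using Invariance yields the required geodesic $a_0, f(a_1), \ldots, f(a_{n-1}), b$, whose consecutive pairs still have type $\tp(ac)$.

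The main thing to be careful about is Step~1: Lemma~\ref{lem:geodesic} only controls the type of the new vertex over $a_i$, so some thought is needed to ensure that $\tp(a_i a_{i+1})$ equals the full two-type $\tp(ac)$ rather than merely having the correct marginal type $\tp(c/a)$ after translation. Pulling $c$ onto $a_i$ by a transitivity automorphism $g$ before applying the lemma is what makes this work. The remaining ingredients (pairwise distinctness of the constructed sequence, which is part of being geodesic; preservation of everything under $f$; applicability of Stationarity with $C = \emptyset$) are either built into the definitions or direct applications of the SIR axioms, and I do not expect them to cause trouble.
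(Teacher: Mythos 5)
Your proof is correct and follows the paper's own argument essentially step for step: repeatedly apply Lemma~\ref{lem:geodesic} to build a geodesic sequence of length $\|\ind\|$ starting at $a$ with all consecutive pairs of type $\tp(ac)$, invoke Boundedness to get $a\ind_\emptyset x_n$, then use Stationarity (with transitivity of $\F$) to find an automorphism fixing $a$ and sending $x_n$ to $b$, and apply it to the sequence. Your explicit use of a transitivity automorphism $g$ to pull $c$ onto $a_i$ spells out what the paper's opening observation (``for every $v\in\F$ there is $v'$ with $\tp(vv')=\tp(ac)$'') encapsulates; the two are the same idea.
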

\begin{proof}
First observe that since all vertices have the same type, for every $v\in \F$ there is $v'\in \F$ such that $\tp(vv')=\tp(ac)$. Put $n=\|\ind\|$ and use Lemma~\ref{lem:geodesic} repeatedly to obtain a geodesic sequence $a,x_1,\ldots,x_{n}$ such that all consecutive pairs of vertices have the type $\tp(ac)$. We know that $a\ind_\emptyset x_{n}$. By Stationarity, $\tp(x_{n}/a)=\tp(b/a)$, hence there exists an automorphism $f$ of $\F$ which fixes $a$ and maps $x_{n}$ to $b$. By Invariance, $f(a),f(x_1),\ldots,f(x_{n})$ has the desired properties.
\end{proof}

\begin{lemma}\label{lem:geodesic_type}
Let $v_1,\ldots,v_k$ and $w_1,\ldots,w_k$ be geodesic sequences of vertices of $\F$ such that for every $1\leq i<k$ we have $\tp(v_iv_{i+1})=\tp(w_iw_{i+1})$. Then $\tp(v_1\cdots v_k)=\tp(w_1\cdots w_k)$.
\end{lemma}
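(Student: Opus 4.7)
The plan is to argue by induction on $k$, the length of the sequences. The cases $k=1$ and $k=2$ are immediate: $k=1$ follows from transitivity of $\F$, and $k=2$ is the hypothesis.

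For the inductive step, assume the statement holds for sequences of length $k-1$, so that $\tp(v_1\cdots v_{k-1}) = \tp(w_1\cdots w_{k-1})$. Pick an automorphism $f$ of $\F$ sending $v_i$ to $w_i$ for $1\leq i\leq k-1$ and let $v_k' = f(v_k)$. By Invariance, $w_1,\ldots,w_{k-1},v_k'$ is a geodesic sequence with $\tp(w_{k-1}v_k') = \tp(v_{k-1}v_k) = \tp(w_{k-1}w_k)$. It therefore suffices to show $\tp(v_k'/w_1\cdots w_{k-1}) = \tp(w_k/w_1\cdots w_{k-1})$; composing with $f$ then yields $\tp(v_1\cdots v_k) = \tp(w_1\cdots w_k)$.

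To obtain this equality of types, I would use Stationarity. The two sequences $w_1,\ldots,w_{k-1},v_k'$ and $w_1,\ldots,w_{k-1},w_k$ being geodesic gives $w_i\ind_{w_{k-1}}v_k'$ and $w_i\ind_{w_{k-1}}w_k$ for every $1\leq i\leq k-2$. Applying Symmetry and then Triviality (from Lemma~\ref{lem:metricity}) inductively over $i$, one combines these into
\[
  v_k'\ind_{w_{k-1}} w_1\cdots w_{k-2} \quad\text{and}\quad w_k\ind_{w_{k-1}} w_1\cdots w_{k-2},
\]
and another use of Symmetry gives $w_1\cdots w_{k-2}\ind_{w_{k-1}} v_k'$ and $w_1\cdots w_{k-2}\ind_{w_{k-1}} w_k$. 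Since $v_k'$ and $w_k$ already have the same type over $w_{k-1}$, Stationarity yields $\tp(v_k'/w_1\cdots w_{k-1}) = \tp(w_k/w_1\cdots w_{k-1})$, completing the induction.

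There is no real obstacle here; the only point that deserves care is the bookkeeping in combining the pairwise independences $w_i\ind_{w_{k-1}}v_k'$ into a single independence of the tuple $w_1\cdots w_{k-2}$ from $v_k'$ over $w_{k-1}$. This is exactly where the Triviality half of Lemma~\ref{lem:metricity} is needed (Corollary~\ref{cor:triviality} is not enough since the base is $w_{k-1}$ rather than $\emptyset$), and it relies crucially on Perfect triviality being built into the metric-like hypothesis.
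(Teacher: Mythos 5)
Your proof is correct and follows essentially the same route as the paper's: induction on the sequence length, combining the geodesic independences at each step via Symmetry and Triviality, and then invoking Stationarity against the inductive type equality. The paper compresses the final step into the phrase ``Stationarity together with Invariance and the induction hypothesis''; your explicit automorphism $f$ and the auxiliary point $v_k'=f(v_k)$ spell out exactly what that remark leaves implicit.
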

\begin{proof}
We shall prove by induction on $m$ that $\tp(v_1\cdots v_m) = \tp(w_1\cdots w_m)$. For $m=2$ this is true by the assumption. Assume now that the statement is true for some $m$. Using the fact that $v_1,\ldots,v_k$ and $w_1,\ldots,w_k$ are geodesic sequences and Triviality we get that $v_1\cdots v_{m-1}\ind_{v_m}v_{m+1}$ and $w_1\cdots w_{m-1}\ind_{w_m}w_{m+1}$. By the assumption we have $\tp(v_mv_{m+1}) = \tp(w_mw_{i+m})$, hence Stationarity together with Invariance and the induction hypothesis give $\tp(v_1\cdots v_{m+1}) = \tp(w_1\cdots w_{m+1})$.


\end{proof}

\begin{prop}\label{prop:concat_geodesic}
Let $a,b,c$ be vertices of $\F$ satisfying the following:
\begin{enumerate}
\item $a\ind_b c$,
\item there is a geodesic sequence $a=v_1, \ldots, v_k=b$,
\item there is a geodesic sequence $b=w_1, \ldots, w_\ell=c$.
\end{enumerate}
Then there is a geodesic sequence $a=x_1,\ldots,x_{k+\ell-1}=c$ such that $\tp(x_1\cdots x_k)=\tp(v_1\cdots v_k)$ and $\tp(x_k\cdots x_{k+\ell-1})=\tp(w_1\cdots w_\ell)$.
\end{prop}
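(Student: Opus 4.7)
My plan is to construct a geodesic sequence that is structurally correct but whose first element is some $z_1$ possibly different from $a$, and then use Stationarity to adjust the first element so it becomes $a$.

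First I would use Existence to pick a tuple $(z_1,\ldots,z_{k-1})$ realising $\tp(v_1\cdots v_{k-1}/b)$ such that $(z_1,\ldots,z_{k-1})\ind_b(w_2,\ldots,w_\ell)$, and set $z_k:=b$. Since automorphisms fixing $b$ carry $v_1\cdots v_{k-1}$ to $z_1\cdots z_{k-1}$, we have $\tp(z_1\cdots z_k)=\tp(v_1\cdots v_k)$, so $(z_1,\ldots,z_k)$ is a geodesic sequence by Invariance. I then claim the concatenation $S := (z_1,\ldots,z_{k-1},b,w_2,\ldots,w_\ell)$ is geodesic. The within-segment independencies are immediate from the geodesic property of $(z_1,\ldots,z_k)$ and of $(b,w_2,\ldots,w_\ell)=(w_1,\ldots,w_\ell)$. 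For a triple of indices $i<j<m$ straddling position $k$ (so $i<k$ and $m>k$), three subcases arise. If $j=k$, then $z_i\ind_b w_{m-k+1}$ follows by Monotonicity from the choice of the $z_i$'s. If $j<k$, the geodesic property of $(z_1,\ldots,z_k)$ yields $z_i\ind_{z_j}b$, Perfect triviality applied to the previous case yields $z_i\ind_{z_jb}w_{m-k+1}$, and Transitivity combines these into $z_i\ind_{z_j}w_{m-k+1}$. If $j>k$, then the $w$-geodesic gives $b\ind_{w_{j-k+1}}w_{m-k+1}$, which combined with $z_i\ind_b w_{m-k+1}$ via Metricity (Lemma~\ref{lem:metricity}) yields $z_i\ind_{w_{j-k+1}}w_{m-k+1}$. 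Pairwise distinctness of the elements in $S$ follows from the metric-like axiom forbidding $v\ind_A v$ for $v\notin A$, since $z_i=w_j$ together with $z_i\ind_b w_j$ would force $z_i=b$, contradicting the $z$-geodesic.

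Now I would adjust the sequence to start at $a$. Since $\tp(z_1/b) = \tp(v_1/b) = \tp(a/b)$, and both $a\ind_b c$ (by hypothesis) and $z_1\ind_b c$ (by Monotonicity from the construction) hold, Stationarity produces an automorphism $\sigma$ of $\F$ fixing $b$ and $c$ with $\sigma(z_1)=a$. Setting $x_i := \sigma(z_i)$ for $1\le i\le k$ and $x_{k+j-1} := \sigma(w_j)$ for $2\le j\le\ell$, the sequence $(x_1,\ldots,x_{k+\ell-1})$ is geodesic by Invariance, satisfies $x_1=a$, $x_k=b$, and $x_{k+\ell-1}=c$ (since $\sigma$ fixes both $b$ and $c$), and the type conditions $\tp(x_1\cdots x_k)=\tp(v_1\cdots v_k)$ and $\tp(x_k\cdots x_{k+\ell-1})=\tp(w_1\cdots w_\ell)$ follow from the corresponding identities for $(z_1\cdots z_k)$ and $(b,w_2,\ldots,w_\ell)$ via Invariance under $\sigma$.

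The main obstacle is the middle step: identifying the right combination of axioms for each of the three cross-segment subcases. Once one recognises that Transitivity handles the ``base in the $z$-segment'' case and Metricity handles the ``base in the $w$-segment'' case, each leveraging the key independence $z_i\ind_b w_{m-k+1}$ coming from the initial choice of the $z_i$'s, the proof assembles cleanly.
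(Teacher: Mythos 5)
Your proof is correct, and it shares the paper's overall strategy --- build a geodesic copy of the concatenated sequence, then use Stationarity to align one endpoint --- but realises it from the other end and with different intermediate machinery. The paper keeps $v_1,\ldots,v_k$ fixed, appends a fresh tail $w_2',\ldots,w_\ell'$ one vertex at a time by iterating Lemma~\ref{lem:geodesic} (whose transitivity hypothesis is what supplies vertices of the right 2-type at each step), invokes Lemma~\ref{lem:geodesic_type} to see that $\tp(v_k w_2'\cdots w_\ell')=\tp(w_1\cdots w_\ell)$, and then aligns the far endpoint $w_\ell'$ with $c$ via Stationarity over $v_1v_k$. You instead keep $b,w_2,\ldots,w_\ell$ fixed, prepend a fresh head $z_1,\ldots,z_{k-1}$ in a single application of Existence (taken independent of the entire $w$-segment over $b$), verify that the concatenation is geodesic directly by a three-way split on the position of the middle index --- Monotonicity for $j=k$, Perfect triviality plus Transitivity for $j<k$, and Metricity (Lemma~\ref{lem:metricity}) for $j>k$ --- handle distinctness via part~(\ref{metriclike_weird}) of Definition~\ref{defn:metriclike}, and then align the near endpoint $z_1$ with $a$ via Stationarity over $b$, moving inside $\Aut(\F/bc)$. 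Your one-shot Existence plus explicit case analysis replaces the paper's iterated Lemma~\ref{lem:geodesic} followed by Lemma~\ref{lem:geodesic_type}; it is a little more hands-on but has the incidental advantage of not invoking transitivity of $\F$, which the paper's route does need. Both proofs use Stationarity in precisely the same role, and each of your individual axiom applications checks out.
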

\begin{proof}
Use Lemma~\ref{lem:geodesic} and the fact that all vertices have the same type $\ell-1$ times to extend $v_1,\ldots,v_k$ by vertices $w_2',\ldots,w_\ell'$ such that $v_1,\ldots,v_k,w_2',\ldots,w_\ell'$ is a geodesic sequence and for every $1\leq i<\ell$ we have $\tp(w_i'w_{i+1}')=\tp(w_iw_{i+1})$, where we put $w_1'=v_k$ to simplify the notation.

In particular, $w_1',\ldots,w_\ell'$ is a geodesic sequence. Using Lemma~\ref{lem:geodesic_type} we get that $\tp(w_1\cdots w_\ell)=\tp(w_1'\cdots w_\ell')$, so in particular $\tp(w_1w_\ell) = \tp(w_1'w_\ell')$. Since $w_1=w_1'=v_k$, we have that $\tp(w_\ell/v_k)=\tp(w_\ell'/v_k)$. By the hypothesis and the construction, $w_\ell\ind_{v_k} v_1$ and $w_\ell'\ind_{v_k} v_1$. Stationarity implies that $w_\ell'\models\tp(w_\ell/v_1v_k)$, so in particular $w_\ell'\models\tp(w_\ell/v_1)$.

In other words, there is an automorphism $g$ of $\F$ such that $g(v_1)=v_1$ and $g(w_\ell')=w_\ell$. The image of $v_1,\ldots,v_k,w_2',\ldots,w_\ell'$ under $g$ then gives the desired geodesic sequence $x_1,\ldots,x_{k+\ell-1}$.
\end{proof}

Let $a,b\in \F$ be distinct. We say that \emph{$b$ is almost free from $a$} if $a\nind_\emptyset b$ and for every $c\in \F$ different from $a,b$ such that $a\ind_b c$ it holds that $a\ind_\emptyset c$.

\begin{observation}\label{obs:almost_free_preserved}
Let $a,b\in \F$ be such that $b$ is almost free from $a$. For every $a',b'\in\F$ such that $\tp(a'b')=\tp(ab)$ it holds that $b'$ is almost free from $a'$.
\end{observation}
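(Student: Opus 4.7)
The plan is to exploit Invariance (axiom \ref{invariance}) directly: the definition of ``almost free'' is phrased entirely in terms of $\ind$ and the relation $\neq$, both of which are $\Aut(\F)$-invariant, so the property depends only on the type $\tp(ab)$. To execute this, I would fix an automorphism $f \in \Aut(\F)$ witnessing $\tp(a'b') = \tp(ab)$, so that $f(a) = a'$ and $f(b) = b'$ (using the convention that types are types of ordered tuples).

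First I would verify the non-independence clause: from $a \nind_\emptyset b$, Invariance applied to $f$ yields $a' \nind_\emptyset b'$. Next, for the main clause, I would take an arbitrary $c' \in \F \setminus \{a', b'\}$ with $a' \ind_{b'} c'$, and set $c = f^{-1}(c')$. Since $f$ is a bijection sending $\{a, b\}$ to $\{a', b'\}$, we have $c \neq a, b$; and by Invariance applied to $f^{-1}$, $a \ind_b c$. The hypothesis that $b$ is almost free from $a$ then gives $a \ind_\emptyset c$, and one final application of Invariance (via $f$) produces $a' \ind_\emptyset c'$, as required.

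I do not expect any real obstacle: the observation is essentially a sanity check that ``almost free'' is a type-invariant notion. The only minor bookkeeping point is ensuring that $f^{-1}(c')$ avoids both $a$ and $b$, which is immediate from the bijectivity of $f$ and the fact that $f$ matches $a$ with $a'$ and $b$ with $b'$.
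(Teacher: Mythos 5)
Your proof is correct and is precisely the straightforward Invariance argument that the paper leaves implicit (the paper states this as an Observation without giving a proof). The bookkeeping about $f^{-1}(c')$ avoiding $a$ and $b$ is handled correctly.
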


\begin{lemma}\label{lem:maximal_distance}
Suppose that $\ind$ is bounded. For every $a\in \F$ and every finite $X\subseteq \F$ such that $a\notin X$ there is $b\in \F$ such that $a$ is almost free from $b$, $b$ is almost free from $a$, and $b\ind_a X$. In particular, $b\nind_\emptyset a$ and $b\ind_\emptyset X$.
\end{lemma}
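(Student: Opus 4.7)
The plan is to first produce vertices $a^\circ, b^\circ \in \F$ with $a^\circ\nind_\emptyset b^\circ$ for which $b^\circ$ is almost free from $a^\circ$ \emph{and} $a^\circ$ is almost free from $b^\circ$, and then use transitivity and Existence to realise this configuration with the prescribed $a$ and additionally with $b\ind_a X$. The ``in particular'' conclusions will follow from Symmetry and the almost-free property combined with Corollary~\ref{cor:triviality}.

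The main obstacle, which is the technical heart of the proof, is the simultaneous existence of both almost-free conditions. I will establish it by contradiction via a geodesic-extending argument that eventually contradicts Boundedness. Suppose that for every pair $a', b'$ of distinct vertices with $a'\nind_\emptyset b'$, at least one of the two almost-free conditions fails at $\tp(a'b')$. Under this assumption I will prove by induction on $k$ that there is a geodesic $z_0, \ldots, z_k$ with $z_0\nind_\emptyset z_k$ for every $k\geq 1$. The base case $k=1$ is supplied by condition~(2) of metric-likeness. For the inductive step, apply the hypothesis to $\tp(z_0 z_k)$: either (a) there exists $c\neq z_0,z_k$ with $z_0\ind_{z_k} c$ and $z_0\nind_\emptyset c$, or (b) there exists $c\neq z_0,z_k$ with $z_k\ind_{z_0} c$ and $z_k\nind_\emptyset c$. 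In case (a), Proposition~\ref{prop:concat_geodesic} applied to the geodesics $z_0,\ldots,z_k$ and $z_k,c$ yields a geodesic of length $k+1$ with endpoints $z_0,c$ satisfying $z_0\nind_\emptyset c$; in case (b), Proposition~\ref{prop:concat_geodesic} applied to the reversed geodesic $z_k,\ldots,z_0$ together with $z_0,c$ produces a geodesic of length $k+1$ with endpoints $z_k,c$ satisfying $z_k\nind_\emptyset c$. Either way the induction continues, and pushing $k$ up to $\|\ind\|$ contradicts Boundedness. Hence $a^\circ,b^\circ$ as claimed exist.

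To finish, I use transitivity of $\F$ to choose an automorphism sending $a^\circ$ to $a$; setting $b^{**}$ to be the image of $b^\circ$, I get $\tp(ab^{**})=\tp(a^\circ b^\circ)$, so by Observation~\ref{obs:almost_free_preserved} applied in both orderings, $b^{**}$ is almost free from $a$ and $a$ is almost free from $b^{**}$. By Existence I then pick $b\models\tp(b^{**}/a)$ with $b\ind_a X$; since $\tp(b/a)=\tp(b^{**}/a)$ we still have $\tp(ab)=\tp(ab^{**})$, so $b$ inherits both almost-free properties. Finally, $b\nind_\emptyset a$ is Symmetry applied to $a\nind_\emptyset b$; and for $b\ind_\emptyset X$, Monotonicity gives $b\ind_a x$ for each $x\in X$, with $x\neq a$ (as $a\notin X$) and $x\neq b$ (as $b=x$ would force $b\ind_a b$, contradicting condition~(1) of metric-likeness), so the fact that $a$ is almost free from $b$ yields $b\ind_\emptyset x$ for every $x\in X$, and Corollary~\ref{cor:triviality} then delivers $b\ind_\emptyset X$.
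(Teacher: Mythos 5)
Your proof is correct and takes essentially the same approach as the paper: both establish the existence of the mutually almost-free pair by using Proposition~\ref{prop:concat_geodesic} to extend geodesics when an almost-freeness condition fails and then invoking Boundedness, the only superficial difference being that the paper directly picks a pair maximizing geodesic length while you run the same extension step as an explicit induction inside a proof by contradiction. The reduction to the prescribed $a$ and $X$ via transitivity of $\F$, Existence, and Observation~\ref{obs:almost_free_preserved}, as well as the derivation of the ``in particular'' clause from Monotonicity, almost-freeness, and Corollary~\ref{cor:triviality}, also match the paper's argument.
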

\begin{proof}
We claim that there exist $a',b'\in \F$ such that $b'$ is almost free from $a'$ and $a'$ is almost free from $b'$. Suppose that this is true. Since $\F$ is transitive, there is an automorphism $f$ such that $f(a')=a$. Pick $b\models\tp(f(b')/a)$ such that $b\ind_a X$. By Observation~\ref{obs:almost_free_preserved}, $b$ is almost free from $a$ and $a$ is almost free from $b$. The ``in particular'' part is immediate using Corollary~\ref{cor:triviality}.





Hence it suffices to prove the claim. Pick $a',b'\in \F$ such that $b'\nind_\emptyset a'$ and the length of the longest geodesic sequence starting at $a'$ finishing at $b'$ is as large as possible. (As $\ind$ is bounded, such $a',b'$ exist.) Pick $c\in\F$ such that $a'\ind_{b'} c$. By Proposition~\ref{prop:concat_geodesic}, we can extend the geodesic sequence from $a'$ to $b'$ by some $c'\models\tp(c/b')$. By the properties of $a',b'$ we get that $a'\ind_\emptyset c'$. Invariance and Stationarity then imply that $a'\ind_\emptyset c$ and consequently $b'$ is almost free from $a'$.

To prove that $a'$ is almost free from $b'$, pick $c\in\F$ such that $b'\ind_{a'} c$. Since the reverse of a geodesic sequence is a geodesic sequence, we extend the geodesic sequence from $b'$ to $a'$ by some $c'\models\tp(c/a')$ as above. Suppose that $b'\nind_\emptyset c'$. Since $\F$ is transitive, there is an automorphism $f$ such that $f(b')=a'$. The image of the geodesic sequence from $b'$ to $c'$ is then a geodesic sequence starting at $a'$ which is longer than the geodesic sequence from $a'$ to $b'$ we started with. This is a contradiction, hence $b'\ind_\emptyset c'$. As before, we get that $a'$ is almost free from $b'$ which concludes the proof.
\end{proof}


\section{Proof of Theorem~\ref{thm:main}}
We will closely follow the proof from the Tent--Ziegler paper on the Urysohn sphere~\cite{Tent2013b} and use the following result by Tent and Ziegler~\cite{Tent2013}.

\begin{definition}
Let $\F$ be a countable structure with a stationary independence relation $\ind$, let $g\in \Aut(\F)$, let $A\subseteq \F$ be finite and let $p=\tp(a/A)$ be a type. We say that \emph{$g$ moves $p$ almost maximally} if there is a realisation $x\models p$ such that $$x\ind_A g(x).$$
\end{definition}

\begin{theorem}[Corollary~5.4,~\cite{Tent2013}]\label{thm:tz}
Let $\F$ be a countable structure with a stationary independence relation and let $g$ be an automorphism of $\F$ which moves every type over every finite set almost maximally. Then every element of $\Aut(\F)$ is a product of sixteen conjugates of $g$.
\end{theorem}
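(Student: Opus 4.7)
My plan is to follow the back-and-forth strategy that Tent and Ziegler introduced for the Urysohn space in \cite{Tent2013}. Given $h \in \Aut(\F)$, the aim is to produce conjugating automorphisms $\sigma_1, \ldots, \sigma_{16}$ with
\[
h = (\sigma_1 g \sigma_1^{-1}) \cdots (\sigma_{16} g \sigma_{16}^{-1}).
\]
The natural way to break this down is in two stages: first, reduce the problem to writing $h$ as a product of a small number of \emph{generic} automorphisms (by which I mean automorphisms that move every type over every finite set almost maximally); second, express each such generic automorphism as a bounded product of conjugates of $g$. Combining the counts gives the bound 16.

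For the first stage, I would enumerate $\F = \{a_1, a_2, \ldots\}$ and construct partial isomorphisms $\tau_1, \tau_2$ approximating generic automorphisms $h_1, h_2$ satisfying $h_1 h_2 = h$, by alternating back-and-forth extensions. At each extension step, Existence is used to place the new image in a position independent from the already-fixed finite data, so that in the limit $h_1$ and $h_2$ are genuinely generic. Stationarity ensures that the independent choices made on the two sides of the back-and-forth cohere, so the resulting $h_1, h_2$ are well-defined automorphisms of $\F$.

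For the second stage, the essential observation—immediate from Invariance—is that every conjugate of $g$ still moves types almost maximally, so the family of conjugates of $g$ supplies an inexhaustible pool of generic building blocks. Given a generic automorphism $h_1$, I would construct a fixed number of conjugating elements $\sigma_1, \ldots, \sigma_k$ with $h_1 = \prod_i \sigma_i g \sigma_i^{-1}$ by a second back-and-forth: at each stage the partial product is a finite partial isomorphism, and the next conjugate is chosen to correct the discrepancy with $h_1$ on the next element in the enumeration, using the almost-maximal-movement property to guarantee the needed flexibility. Tracking the constants carefully across the two stages yields the total of $16$.

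The main obstacle is the simultaneous bookkeeping during each back-and-forth: one must preserve the ``moves almost maximally'' condition of the emerging automorphisms \emph{and} ensure the prescribed product identity on the finite subset considered so far, all while maintaining consistency with the partial information fixed on the other conjugating elements. The SIR axioms are used precisely for this: Existence provides fresh independent elements at each extension step, Stationarity matches extensions produced on the two sides of the back-and-forth, and Invariance propagates genericity from $g$ to all of its conjugates. No use is made of the metric-like, 1-supported, or boundedness hypotheses of Theorem~\ref{thm:main}, which is consistent with the statement being a general fact about SIRs.
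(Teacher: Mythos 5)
This theorem is imported verbatim as Corollary~5.4 of Tent and Ziegler~\cite{Tent2013} and used as a black box; the paper you are reading gives no proof of it, so the relevant comparison is against the argument in the cited source.

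The central gap lies in your second stage, where you claim that any automorphism moving every type almost maximally can be written as a product of a bounded number of conjugates of $g$. This is essentially the theorem restated, restricted to the case where the target $h$ is itself generic. Your only justification---that conjugates of $g$ form an ``inexhaustible pool of generic building blocks''---shows merely that each individual conjugate is generic (by Invariance); it gives no mechanism for why a \emph{fixed number} of them multiplied together should realize an arbitrary prescribed generic automorphism on all of $\F$. Your first stage also contains an unaddressed difficulty: once $h_1$ has been specified on a finite set, $h_2 = h_1^{-1}h$ is forced on a corresponding finite set, so you do not get to choose both freely; you would need to argue that placing $h_1$ generically automatically makes $h_1^{-1}h$ generic as well.

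Tent and Ziegler's actual proof does not factor through a decomposition of $h$ into two generic automorphisms. It is a single, carefully calibrated back-and-forth construction which, given the target $h$, builds conjugating elements so that an explicit word in conjugates of $g$ and of $g^{-1}$ (using the companion fact that $g^{-1}$ also moves every type almost maximally) equals $h$; the bound of sixteen comes from counting the letters in that word. Existence and Stationarity are indeed deployed, as you say, to place new images independently and to make the two directions of the back-and-forth cohere---but the hard part, which your sketch names as the ``main obstacle'' without resolving, is maintaining the prescribed product identity while all the conjugating elements are being extended simultaneously. You are right that the theorem is a general fact about SIRs and uses none of the metric-like, $1$-supported, or bounded hypotheses, but the sketch as written does not supply the construction or any accounting for the constant $16$.
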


Throughout the section, we fix $\F$ and $\ind$ as in Theorem~\ref{thm:main} ($\F$ is a transitive countable relational structure with a bounded 1-supported metric-like stationary independence relation $\ind$) and put $G = \Aut(\F)$. As before, we may assume that $\F$ is homogeneous (this will slightly simplify the proof of Lemma~\ref{lem:pump}).

\begin{lemma}\label{lem:exists1}
If $g\in G$ is not the identity then there is $a\in\F$ and $h\in G$ which is a product of $\|\ind\|$ conjugates of $g$ such that $a\ind_\emptyset h(a)$.
\end{lemma}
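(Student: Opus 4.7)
The plan is to exploit the fact that $g$ acts nontrivially on at least one vertex to get a single ``step'' of the desired type, and then use the machinery of Section~2 to concatenate $\|\ind\|$ such steps into a geodesic sequence whose endpoints are independent over the empty set by Boundedness. The automorphism $h$ will then be assembled as a product of $\|\ind\|$ conjugates of $g$, each conjugate executing one step of the sequence.

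First I would fix $a \in \F$ with $g(a) \neq a$ (which exists because $g \neq 1$) and set $c := g(a)$; the pair $(a,c)$ now has a distinguished type $\tp(ac)$, and every conjugate $fgf^{-1}$ sends $f(a)$ to a realisation of $\tp(f(a)c')$ with $(f(a),c')$ of type $\tp(ac)$. Next, writing $n := \|\ind\|$, I would construct inductively a geodesic sequence $a = a_0, a_1, \ldots, a_n$ such that $\tp(a_i a_{i+1}) = \tp(ac)$ for every $i$. At step $i$, transitivity of $\F$ provides $c_i \in \F$ with $\tp(a_i c_i) = \tp(ac)$ (and in particular $c_i \ne a_i$), and Lemma~\ref{lem:geodesic} applied with $b := c_i$ yields $a_{i+1}$ realising $\tp(c_i/a_i)$ and extending the geodesic sequence. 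Because $\tp(a_{i+1}/a_i) = \tp(c_i/a_i)$, the pair type $\tp(a_i a_{i+1})$ equals $\tp(a_i c_i) = \tp(ac)$.

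Now I would realise the sequence dynamically. For each $0 \leq i < n$, homogeneity of $\F$ furnishes $f_i \in G$ with $f_i(a) = a_i$ and $f_i(c) = a_{i+1}$; set $g_i := f_i g f_i^{-1}$, so $g_i(a_i) = f_i(g(a)) = f_i(c) = a_{i+1}$. Defining $h := g_{n-1} g_{n-2} \cdots g_1 g_0$ gives a product of $n = \|\ind\|$ conjugates of $g$ satisfying $h(a) = a_n$. Boundedness then yields $a_0 \ind_\emptyset a_n$, i.e.\ $a \ind_\emptyset h(a)$, as required (and in particular $h(a) \neq a$, since property (1) of metric-like SIRs forbids $a \ind_\emptyset a$).

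The only place that requires genuine care is the inductive construction of the geodesic sequence with the prescribed pairwise types: one must feed Lemma~\ref{lem:geodesic} with a carefully chosen witness $c_i$ for the type $\tp(ac)$ based at the current endpoint $a_i$, rather than at $a$, so that the resulting $a_{i+1}$ can later be produced as $g_i(a_i)$ for a conjugate of $g$. Beyond this, the argument is a direct pipeline from $g \ne 1$ through transitivity, Lemma~\ref{lem:geodesic}, homogeneity, and Boundedness.
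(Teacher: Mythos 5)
Your proof is correct and takes essentially the same approach as the paper: construct a geodesic sequence $a=a_0,\ldots,a_n$ of length $n=\|\ind\|$ whose consecutive pairs realise $\tp(a\,g(a))$, then assemble $h$ as a product of conjugates $f_i g f_i^{-1}$ that step through the sequence. The paper routes the construction through the auxiliary Lemma~\ref{lem:exists1_aux} (first picking $b$ with $a\ind_\emptyset b$ and mapping the sequence's endpoint onto $b$), whereas you build the geodesic directly via repeated use of Lemma~\ref{lem:geodesic} and invoke Boundedness on the resulting sequence; this is a harmless streamlining of the same argument.
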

\begin{proof}
Let $a\in \F$ be such that $a\neq g(a)$ and pick $b\in \F$ such that $a\ind_\emptyset b$ (Existence). Use Lemma~\ref{lem:exists1_aux} to obtain a geodesic sequence $a=a_0,\ldots,a_n=b$ such that $n=\|\ind\|$ and for every $0\leq i\leq n-1$ we have $\tp(a_ia_{i+1})=\tp(ag(a))$. This means that there are automorphisms $h_0,\ldots, h_{n-1}$ such that $h_i(a)=a_i$ and $h_{i}(g(a))=a_{i+1}$. Then $h_igh_i^{-1}$ moves $a_i$ to $a_{i+1}$ and the statement follows.
\end{proof}

\begin{lemma}\label{lem:movesall1}
Let $g\in G$ be such that for some $a\in \F$ we have $a\ind_\emptyset g(a)$. Then for every finite set $A\subset \F$ there is $x\in \F$ with $x\ind_\emptyset A$ and $x\neq g(x)$.
\end{lemma}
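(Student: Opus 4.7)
My plan is to apply Lemma~\ref{lem:maximal_distance} with a suitably chosen vertex $v$. First I would identify some $v\in \F\setminus A$ with $g(v)\neq v$; then I would invoke Lemma~\ref{lem:maximal_distance} with vertex $v$ and set $A\cup\{g(v)\}$ (valid since $v\notin A$ and $v\neq g(v)$) to produce $x$ with $x\ind_\emptyset A\cup\{g(v)\}$ and $x\nind_\emptyset v$. Monotonicity then yields $x\ind_\emptyset A$, and if $g(x)=x$, Invariance applied to $x\ind_\emptyset g(v)$ would give $x=g^{-1}(x)\ind_\emptyset v$, contradicting $x\nind_\emptyset v$. So the whole problem reduces to producing such a $v$.

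To produce $v\in \F\setminus A$ with $g(v)\neq v$, I would split into cases. Note first that $a\neq g(a)$: otherwise $a\ind_\emptyset a$ would contradict clause (1) of the metric-like definition. If $a\notin A$, take $v:=a$. If $a\in A$ but $g(a)\notin A$, take $v:=g(a)$, since $g^2(a)\neq g(a)$ by injectivity of $g$. The delicate case is $a,g(a)\in A$; here I plan to argue by contradiction. Assume $g$ fixes every vertex of $\F\setminus A$ pointwise. Applying Lemma~\ref{lem:maximal_distance} to the vertex $g(a)$ and the set $A\setminus\{g(a)\}$ produces $w$ with $w\ind_\emptyset A\setminus\{g(a)\}$, $w\nind_\emptyset g(a)$, and (by the almost-free clause, which forces distinctness) $w\neq g(a)$. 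Hence $w\notin A$, so by assumption $g(w)=w$. Since $a\in A\setminus\{g(a)\}$ we have $w\ind_\emptyset a$, and Invariance gives $w=g(w)\ind_\emptyset g(a)$, contradicting $w\nind_\emptyset g(a)$.

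The main obstacle, I expect, is precisely this last case: the given witness $a$ may lie inside $A$, so Lemma~\ref{lem:maximal_distance} cannot be directly applied with vertex $a$ (that would produce $b\nind_\emptyset a$, incompatible with the desired $b\ind_\emptyset A$). The trick is to exploit Boundedness (through Lemma~\ref{lem:maximal_distance}) to manufacture another moved vertex outside $A$, starting only from the hypothesis $a\ind_\emptyset g(a)$.
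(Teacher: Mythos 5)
Your proof is correct, but it departs from the paper's in an interesting way. Both arguments funnel through Lemma~\ref{lem:maximal_distance} to produce the final $x$, and both ultimately hinge on exhibiting a vertex outside $A$ that is moved by $g$. The paper manufactures such a vertex directly: keeping $a\in A$, it picks $b$ with $b\nind_\emptyset a$ and $b\ind_a (A\cup g^{-1}(A))$, and then uses Transitivity together with the hypothesis $a\ind_\emptyset g(a)$ (pulled back to $a\ind_\emptyset g^{-1}(a)$) to conclude $g(b)\ind_\emptyset a$, whence $g(b)\neq b$ and $g(b)\notin A$; a single application of Lemma~\ref{lem:maximal_distance} to $g(b)$ and $A\cup\{b\}$ then finishes. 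You instead abstract the task to ``find any $v\in\F\setminus A$ with $g(v)\neq v$'' and resolve the hard subcase $a,g(a)\in A$ by contradiction, at the cost of a second invocation of Lemma~\ref{lem:maximal_distance}. A pleasant byproduct of your route is that it never uses $a\ind_\emptyset g(a)$ beyond the trivial consequence $a\neq g(a)$ (via clause (1) of Definition~\ref{defn:metriclike}), so it actually proves the lemma under the weaker hypothesis that $g$ is not the identity. This would let the proof of Proposition~\ref{prop:main} take $g_0=g$ directly and skip Lemma~\ref{lem:exists1}, improving the conjugate count from $2\|\ind\|^2$ to $2^{\lceil\log_2\|\ind\|\rceil}\leq 2\|\ind\|$. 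The paper's version is shorter and avoids the case split, but relies more heavily on the specific hypothesis.
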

\begin{proof}
We may assume that $a\in A$. Put $Y=A\cup g^{-1}(A)$ and choose $b\in \F$ with $b\neq a$ and $b\nind_\emptyset a$ ($\ind$ is metric-like) such that moreover $b\ind_a Y$ (Existence and Invariance). This means that $b\notin g^{-1}(A)$ (if $b\in g^{-1}(A)$, then $b\in Y$, so $b\ind_a b$, which is in contradiction with part~(\ref{metriclike_weird}) of Definition~\ref{defn:metriclike}) and hence $g(b)\notin A$. We know that $a \ind_\emptyset g^{-1}(a)$ (by Invariance) and also $b\ind_a g^{-1}(a)$,
thus $b \ind_\emptyset g^{-1}(a)$ (Transitivity) and so $g(b)\ind_\emptyset a$ (Invariance). This means that $b\neq g(b)$ and therefore $g(b)\notin A\cup \{b\}$.

Use Lemma~\ref{lem:maximal_distance} to obtain $x\in \F$ such that $x\nind_\emptyset g(b)$ and $x\ind_\emptyset Ab$. By Monotonicity, $x\ind_\emptyset A$ and $x\ind_\emptyset b$, hence also $g(x)\ind_\emptyset g(b)$, thus $x\neq g(x)$.
\end{proof}

Let $X\subset \F$ be a finite set and let $a\in \F$ be such that $a\ind_\emptyset X$. We call the type $\tp(a/X)$ a \emph{free type}. (It is the unique such type over $X$.)


\begin{lemma}\label{lem:movesall}
Let $g\in G$ be such that for every free type $p$  there is a realisation $a\models p$ with $g(a)\neq a$. Then for every finite $X\subset \F$ and every type $q=\tp(x/X)$ with $x\notin X$, there is a realisation $c\models q$ such that $g(c)\neq c$.
\end{lemma}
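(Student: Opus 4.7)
The plan is to construct a realisation $c\models q$ directly and then verify that $g$ moves it, using the free-type hypothesis together with the tools developed in Section~2.

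First, I would apply Lemma~\ref{lem:maximal_distance} to $(x,X)$ to obtain $b\in\F$ such that $b\ind_\emptyset X$, $b\nind_\emptyset x$, and $x,b$ are mutually almost free. Next, apply the hypothesis with $A = X\cup g^{-1}(X)$ to obtain $y\in\F$ with $y\ind_\emptyset A$ and $g(y)\neq y$; Invariance applied to $y\ind_\emptyset g^{-1}(X)$ yields $g(y)\ind_\emptyset X$. Since $\tp(y/X)=\tp(b/X)$ is the free type over $X$, by homogeneity of $\F$ there is $f\in\Aut(\F/X)$ with $f(b)=y$. Set $c:=f(x)$; since $f$ fixes $X$ pointwise, $\tp(c/X)=\tp(x/X)=q$, so $c\models q$, and $\tp(cy)=\tp(xb)$, so $c$ and $y$ are mutually almost free. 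Note that (in Case B where $q$ is not the free type; in Case A the conclusion is immediate from the hypothesis) one has $c\neq y$ and $c\neq g(y)$ since their types over $X$ differ, so the definition of almost-freeness may legitimately be applied with $g(y)$ as the witness.

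Now suppose for contradiction that $g(c)=c$. Applying Invariance to the pair $(c,y)$ yields $\tp(y/c)=\tp(g(y)/c)$, and in particular $g(y)\nind_\emptyset c$. The almost-freeness of $y$ from $c$ applied with $g(y)$ gives the implication: if $c\ind_y g(y)$ then $c\ind_\emptyset g(y)$, contradicting the above; hence $c\nind_y g(y)$. The dual almost-freeness of $c$ from $y$ applied with $g(y)$ gives: if $y\ind_c g(y)$ then $y\ind_\emptyset g(y)$. The plan is to combine these two restrictions with 1-supportedness and the metric-like axioms on the triple $(c,y,g(y))$ to force $y=g(y)$, contradicting the choice of $y$.

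I expect the main obstacle to be precisely this last step: the almost-freeness conditions restrict the configuration $(c,y,g(y))$ tightly but do not immediately preclude $g(y)\neq y$ when $\tp(y/c)=\tp(g(y)/c)$. One route is, in the case $y\ind_\emptyset g(y)$, to invoke Lemma~\ref{lem:movesall1} to extract additional independence relations around $c$; the alternative case $y\nind_c g(y)$ combined with $c\nind_y g(y)$ should be ruled out by an application of 1-supportedness and Perfect triviality. If this direct combinatorial path does not close, a fallback is to reselect $y$ by applying the free-type hypothesis to a larger finite set (for instance including $b$, $c$, and $g(X)$), using Stationarity and the boundedness of $\ind$ together with Proposition~\ref{prop:concat_geodesic} to propagate the rigidity along a geodesic of length $\leq \|\ind\|$ and close the argument.
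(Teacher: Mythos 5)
Your construction gets as far as obtaining $c\models q$ and $y$ with $y\ind_\emptyset X$, $g(y)\neq y$, $c\nind_\emptyset y$, and $c,y$ mutually almost free, which is close to the paper's route. But there is a genuine gap at the final step, exactly where you flag it, and it cannot close as proposed. The facts you derive from the assumption $g(c)=c$ --- namely $c\nind_\emptyset g(y)$, $c\nind_y g(y)$, and the conditional ``$y\ind_c g(y)\Rightarrow y\ind_\emptyset g(y)$'' --- are \emph{not} strong enough to force $y=g(y)$. Concretely, in the integer metric on $\{1,\dots,n\}$ mutual almost-freeness of $c$ and $y$ forces $d(c,y)=n-1$, and the triangle $d(c,y)=d(c,g(y))=n-1$, $d(y,g(y))=n$ satisfies every one of your derived constraints (in particular $c\nind_y g(y)$ and $y\ind_c g(y)$, hence $y\ind_\emptyset g(y)$) with $y\neq g(y)$. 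Neither 1-supportedness nor Perfect triviality helps here: both act on \emph{positive} independence statements (shrinking or enlarging the base), so they cannot turn two $\nind$ facts into a contradiction. Your listed fallbacks do not repair this: Lemma~\ref{lem:movesall1} is already subsumed by the hypothesis of the present lemma, re-choosing $y$ with $c$ in the parameter set is circular because $c$ was built from $y$ via $f$, and boundedness and geodesics play no role in the paper's argument for this lemma.

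The missing idea is to \emph{re-select $c$} via Existence: after constructing $c'=f(x)$, replace it by some $c\models\tp(c'/Xy)$ satisfying the extra condition $c\ind_{Xy}g(y)$; Observation~\ref{obs:almost_free_preserved} preserves almost-freeness and $c\nind_\emptyset y$. Only then does 1-supportedness enter: $c\ind_{Xy}g(y)$ splits into $c\ind_y g(y)$ (almost-freeness then yields $c\ind_\emptyset g(y)$) or $c\ind_X g(y)$. Handling the latter branch requires a preliminary reference realisation $b\models q$ with $b\ind_X g(y)$ and a case split on whether $b\ind_\emptyset g(y)$: if yes, Stationarity transfers $c\ind_\emptyset g(y)$; if no, the almost-free machinery is bypassed entirely and one instead chooses $c\models q$ with $c\ind_X yg(y)$, reading off $c\nind_\emptyset g(y)$ and $c\ind_\emptyset y$ directly. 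In every branch the contradiction with $g(c)=c$ is at the level of 2-types --- $c\nind_\emptyset y$ while $c\ind_\emptyset g(y)$ (or vice versa), so $g$ cannot carry $(c,y)$ to $(c,g(y))$ --- not the stronger and unattainable conclusion $y=g(y)$ that your plan aims for.
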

\begin{proof}
Let $a$ be a vertex such that $a\ind_\emptyset X$ and $g(a)\neq a$ ($a$ exists by the assumptions of this lemma) and let $b\models q$ be such that $b\ind_X g(a)$.

If $b\nind_\emptyset g(a)$ then pick $c\models q$ such that $c\ind_X ag(a)$. This means that $c\nind_\emptyset g(a)$ (by Stationarity and Invariance) and $c\ind_\emptyset a$ (by Transitivity), giving us $g(c)\neq c$.

So we have $b \ind_\emptyset g(a)$. Use Lemma~\ref{lem:maximal_distance} to obtain $a'\in \F$ such that $a'\nind_\emptyset b$, $a'\ind_\emptyset X$, and $a'$ is almost free from $b$. By Stationarity, we have that $a\models \tp(a'/X)$, hence there is $f\in G$ fixing $X$ pointwise such that $f(a')=a$. Put $c'=f(b)$. In particular, $c'\models q$, $a\nind_\emptyset c'$, and $a$ is almost free from $c'$ (Observation~\ref{obs:almost_free_preserved}).

Choose $c\models \tp(c'/Xa)$ such that $c\ind_{Xa} g(a)$. In particular, $c\nind_\emptyset a$ (Invariance). By Observation~\ref{obs:almost_free_preserved}, $a$ is almost free from $c$. Using 1-suppor\-ted\-ness, $c\ind_{Xa} g(a)$ implies that either $c\ind_a g(a)$ (in which case $c\ind_\emptyset g(a)$ and hence $g(c)\neq c$), or $c\ind_X g(a)$. In this case we know that $\tp(c/X) = \tp(b/X)$ and $b\ind_X g(a)$ (using Perfect triviality on $b\ind_\emptyset g(a)$), hence by Stationarity and Invariance, $c\ind_\emptyset g(a)$, thus again $g(c)\neq c$.
\end{proof}

We say that $g\in G$ \emph{moves type $p$ by distance $k$} if there is $a\models p$ and a geodesic sequence $a=a_0,\ldots,a_k=g(a)$. If $p=\tp(x/X)$ is a type and $h$ is an automorphism or a partial automorphism defined on a finite set such that $X\subseteq \dom(h)$, we denote $h(p) = \tp\left(h'(x)/h'(X)\right)$, where $h'$ is some automorphism of $\F$ extending $h$ (remember that we assumed that $\F$ is homogeneous).


\begin{lemma}\label{lem:pump}
Let $g\in G$ be such that $g$ moves all types almost maximally or by distance $n$. Then there exists $h\in G$ such that $[g,h]=g^{-1}h^{-1}gh$ moves all types almost maximally or by distance $2n$.
\end{lemma}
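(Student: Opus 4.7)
The plan is to build $h$ by back-and-forth, as the union of an increasing chain of finite partial automorphisms $h_0 \subseteq h_1 \subseteq \cdots$ that extends to $h \in \Aut(\F)$ by the homogeneity of $\F$. Enumerating all types $q_i = \tp(x/X_i)$ over finite $X_i$, at step $i$ we commit enough of $h_i$ that for a specific realization $a \models q_i$ the image $[g, h](a)$ is determined and witnesses that $[g, h]$ moves $q_i$ either almost maximally or by distance $2n$, regardless of further extension. By the hypothesis on $g$, the realization $a$ may be chosen with either (A) $a \ind_{X_i} g(a)$, or (B) a geodesic $a = a_0, \ldots, a_n = g(a)$; using Existence, we place $a$ and the auxiliary points in general position with respect to $h_{i-1}$.

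The crucial identity is: if we commit $h(a) = y$ and $h(g(v)) = g(y)$, then
\[ [g, h](a) \;=\; g^{-1}(h^{-1}(g(y))) \;=\; g^{-1}(g(v)) \;=\; v. \]
So each step reduces to choosing $v$ (the desired image of $a$ under $[g, h]$) and an auxiliary witness $y$ such that (i) $v$ has the required geometric relation to $a$, and (ii) the partial map $a \mapsto y,\ g(v) \mapsto g(y)$ is a partial automorphism extending $h_{i-1}$, which amounts to $\tp(a, g(v)) = \tp(y, g(y))$ together with free-position conditions against the finite data in $h_{i-1}$.

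In Case (A), pick $v$ generic with $v \ind_\emptyset a$; by Perfect triviality, $v \ind_{X_i} a$, witnessing almost maximality. Pick $y$ generic with $y \ind_\emptyset g(y)$ (supplied by $g$ moving the transitive type almost maximally, or handled by further genericity otherwise); then both pair-types in (ii) are ``free pair'' types, so (ii) holds. In Case (B), to force $v$ at geodesic distance $2n$ from $a$, extend the geodesic from $a$ through $a_n = g(a)$ by one more step of length $n$ to some vertex $f$, set $v := g^{-1}(f)$, and arrange $a \ind_f v$ by Existence; then Proposition~\ref{prop:concat_geodesic} yields a geodesic $a \to f \to v$ of length $2n$, and $g(v) = f$ makes $\tp(a, g(v)) = \tp(a, f)$ a pair joined by a geodesic of length $n$. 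For (ii), choose $y$ witnessing that $g$ moves the transitive type by distance $n$.

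The principal obstacle is the type-matching $\tp(a, f) = \tp(y, g(y))$ in Case (B): both are pairs joined by a geodesic of length $n$, but equality of the full type is stronger than sharing this ``shape''. The plan is to use Lemma~\ref{lem:geodesic} to build the relevant geodesics with uniform consecutive-pair types, so that Lemma~\ref{lem:geodesic_type} supplies the required type equality. The SIR axioms -- particularly Perfect triviality, 1-supportedness, and Stationarity -- will be invoked throughout to ensure that the finitely many generic choices can be made simultaneously and that the resulting finite partial map genuinely extends $h_{i-1}$ as a partial automorphism, with existence of an extension of the full union to an element of $\Aut(\F)$ then following from homogeneity.
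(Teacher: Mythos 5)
Your high-level plan matches the paper's: build $h$ by back-and-forth, and at each step commit $h(a)=b$ and $h(c)=g(b)$ so that $[g,h](a)=g^{-1}(c)$; the task reduces to making $g^{-1}(c)$ sit correctly over $X$. However, the step construction is organised in the wrong order, and this creates a real gap.

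Your plan is to \emph{fix $v$ first} (the target of $[g,h](a)$, with the desired geometry over $X$), set $c=g(v)$, and then look for an auxiliary $y$ with $\tp(a,g(v))=\tp(y,g(y))$. The trouble is that $v$ and $g(v)$ are rigidly tied together by $g$, so you cannot freely prescribe both the relation of $v$ to $a$ over $X$ \emph{and} the type $\tp(a,g(v))$. In Case (B) this surfaces concretely: you set $v:=g^{-1}(f)$ and then write ``arrange $a\ind_f v$ by Existence''. But $f$ is already chosen and $v=g^{-1}(f)$ is then determined, so there is no freedom left to arrange anything. Moreover, applying $g^{-1}$ to your geodesic $a=a_0,\dots,a_n=g(a),\dots,a_{2n}=f$ gives a geodesic $g^{-1}(a),\dots,a,\dots,v$ in which $a$ sits in the \emph{middle}: the subsequence from $a$ to $v$ has length $n$, not $2n$. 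So the construction does not produce a length-$2n$ geodesic from $a$ to $v$, and this cannot be repaired within your framework because the geodesic was built on the wrong side of $a$.

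The paper resolves both difficulties by reversing the order of choices and then \emph{deducing} rather than \emph{imposing} the geometry of $g^{-1}(c)$. It first chooses $a$ with $a\ind_X Ug^{-1}(U)$ (crucially $U\supseteq X\cup g^{-1}(X)$), then chooses $b$ (your $y$) over $V=h'(U)$ using the hypothesis on $g$, and then chooses $c$ realising the pulled-back type $h_0^{-1}(\tp(g(b)/Vb))$ \emph{with the genericity condition} $c\ind_{Ua}g(a)$. This forces $\tp(Uac)=\tp(Vbg(b))$ by construction, so the partial-automorphism requirement is automatic, and $\tp(ac)=\tp(bg(b))$ transports the geodesic from $b$ to $g(b)$ into a geodesic from $a$ to $c$. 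The heart of the argument is then: from $a\ind_X g^{-1}(U)$ one gets $g(a)\ind_{g(X)}U$, so Metricity on $c\ind_{Ua}g(a)$ yields $c\ind_{g(X)a}g(a)$, and 1-supportedness splits this into $c\ind_{g(X)}g(a)$ (giving $g^{-1}(c)\ind_X a$, hence almost maximality) or $c\ind_a g(a)$. In the latter case Proposition~\ref{prop:concat_geodesic} concatenates the reversal of $a\to g(a)$ with $a\to c$ into a length-$2n$ geodesic $g(a)\to a\to c$, whose $g^{-1}$-image $a\to g^{-1}(a)\to v$ is the required length-$2n$ geodesic from $a$ to $v$ --- built on the opposite side of $a$ from where you built yours. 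Your proposal never invokes Metricity, never introduces the condition $c\ind_{Ua}g(a)$, never produces the $c\ind_a g(a)$ vs.\ $c\ind_{g(X)}g(a)$ dichotomy, and does not handle the mixed cases where $a$ is moved by distance $n$ while $b$ is moved almost maximally (or conversely); these are precisely the ingredients that make the lemma go through.
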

\begin{proof}
As in~\cite{Tent2013b}, we construct $h$ by a ``back-and-forth'' construction as the union of a chain of finite partial automorphisms. We show the following: Let $h'$ be already defined on a finite set $U$ and let $p=\tp(x/X)$ be a type. Then $h'$ has an extension $h$ such that $[g,h]$ moves $p$ almost maximally or by distance $2n$.

We can assume that $X\cup g^{-1}(X)\subseteq U$. Put $V=h'(U)$. Let $a'$ be a realisation of $p$ such that $a'\ind_X Ug^{-1}(U)$ and let $b'$ be a realisation of $h'(\tp(a'/U))$ (which is a type over $V$). By the hypothesis on $g$ there are realisations $a\models \tp(a'/Ug^{-1}(U))$ and $b\models \tp(b'/V)$ such that either $a\ind_{Ug^{-1}(U)} g(a)$, or there is a geodesic sequence $a=a_0,\ldots,a_n=g(a)$ and similarly for $b$. We also have
$$
a\ind_X Ug^{-1}(U) \text{ and } b\ind_{h'(X)} V.
$$

Let $h_0$ be the isomorphism $Ua\simeq Vb$ and let $c$ be a realisation of $h_0^{-1}(\tp(g(b)/Vb))$ (which is a type over $Ua$) such that $c\ind_{Ua} g(a)$. Put $h$ to be the isomorphism $Uac\simeq Vbg(b)$. Observe that $[g,h](a) = g^{-1}(c)$. It remains to prove that $a$ witnesses that $[g,h]$ moves $p$ almost maximally or by distance $2n$.

Since $a\ind_X g^{-1}(U)$, we know that $g(a)\ind_{g(X)}U$. Using Metricity, we get
$$c\ind_{g(X)a}g(a),$$
thus from 1-supportedness we know that either $c\ind_a g(a)$ or $c\ind_{g(X)}g(a)$. In the second case we get $g^{-1}(c)\ind_X a$, which implies that $[g,h]$ moves $p$ almost maximally. Hence we can assume that
$$c\ind_a g(a).$$

By the choice of $a$ and $b$ we know that one of the following cases occurs:
\begin{enumerate}
\item First suppose that there are geodesic sequences $b=b_0,\ldots,b_n=g(b)$ and $g(a)=a_0,\ldots,a_n=a$ (the reverse of a geodesic sequence is a geodesic sequence by Symmetry). From the construction we know that $\tp(ac)=\tp(bg(b))$. This implies that there is a geodesic sequence $a=c_0, \ldots, c_n=c$. Since $g(a)\ind_a c$, Proposition~\ref{prop:concat_geodesic} gives a geodesic sequence starting at $g(a)$ and finishing at $c$ using $2n+1$ vertices (including $c$ and $g(a)$). Finally, taking the image of this sequence under $g^{-1}$ gives a geodesic sequence starting at $a$ and finishing at $g^{-1}(c)=[g,h](a)$  using $2n+1$ vertices. This means that $a$ witnesses that $[g,h]$ moves $p$ by distance $2n$.

\item Now assume that $a\ind_{Ug^{-1}(U)} g(a)$. Then in fact we have $a\ind_X g(a)$, because $a\ind_X Ug^{-1}(U)$ (Metricity). As $U\supseteq Xg^{-1}(X)$, $a\ind_X U$ also implies $g(a)\ind_{g(X)} X$ (by Invariance and Monotonicity), which together with $a\ind_X g(a)$ implies $a\ind_{g(X)} g(a)$ (Metricity). Thus from $c\ind_{a}g(a)$ we get $c\ind_{g(X)}g(a)$ (yet again Metricity) and thus $g^{-1}(c)\ind_X a$, i.e. $a$ witnesses that $[g,h]$ moves $p$ almost maximally.

\item Otherwise we have $b\ind_V g(b)$. Using that $h$ is an isomorphism of $Uac$ and $Vbg(b)$ and Invariance we obtain $a\ind_U c$. Then we get $a\ind_X c$, because $a\ind_X U$ (Metricity), and then, combining with $c\ind_a g(a)$ using Metricity again, we obtain $c\ind_X g(a)$. As in the previous case, $a\ind_X U$ implies $g(a)\ind_{g(X)} X$ and hence $c\ind_{g(X)} g(a)$, or $g^{-1}(c)\ind_X a$, i.e. $a$ witnesses that $[g,h]$ moves $p$ almost maximally.
\end{enumerate}
\end{proof}

Now we prove the following proposition, Theorem~\ref{thm:main} is then its direct consequence.
\begin{prop}\label{prop:main}
Let $\F$ be a countable relational structure with a bounded 1-supported metric-like stationary independence relation $\ind$ and let $g$ be a non-identity automorphism of $\F$. Then there is an automorphism of $\F$ which is a product of at most $2\|\ind\|^2$ conjugates of $g$ and $g^{-1}$ and moves every type over every finite set almost maximally.
\end{prop}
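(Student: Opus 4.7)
The strategy is to bootstrap from a single free-type witness to uniform control over all types via a geometric doubling argument, and then to upgrade ``moves by large distance'' to ``almost maximal'' in one line. First, I would apply Lemma~\ref{lem:exists1} to $g$ to obtain $a\in\F$ and an automorphism $h_0$, expressible as a product of $\|\ind\|$ conjugates of $g$, satisfying $a\ind_\emptyset h_0(a)$. Feeding $h_0$ into Lemma~\ref{lem:movesall1} shows that $h_0$ moves every free type, and Lemma~\ref{lem:movesall} then upgrades this to: $h_0$ moves every type over every finite set. Since any two distinct vertices trivially form a geodesic sequence, this means $h_0$ moves every non-trivial type almost maximally or by distance $1$.

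Next, I would iterate Lemma~\ref{lem:pump}: setting $h_{i+1}=[h_i,f_i]$ with $f_i$ produced by the lemma, an easy induction yields that $h_i$ moves every type almost maximally or by distance $2^i$. Choosing $k=\lceil\log_2\|\ind\|\rceil$, we obtain an automorphism $h_k$ which moves every type almost maximally or by distance $2^k\ge\|\ind\|$.

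The final step is to eliminate the ``by distance'' alternative, and I expect this to be the main conceptual point. Suppose $h_k$ moves some $p=\tp(x/X)$ by distance $2^k$, witnessed by $a\models p$ and a geodesic sequence $a=a_0,\ldots,a_{2^k}=h_k(a)$. Because $2^k\ge\|\ind\|$, Boundedness gives $a\ind_\emptyset h_k(a)$, and Perfect triviality then upgrades this to $a\ind_X h_k(a)$ since $\emptyset\subseteq X$. Hence $a$ already witnesses that $h_k$ moves $p$ almost maximally, and the disjunction collapses. The whole difficulty of going from a discrete-metric ``long geodesic'' statement to the type-theoretic ``almost maximal'' statement is absorbed by this single interplay between Boundedness and Perfect triviality, which is why the metric-like hypothesis enters here.

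For the conjugate count, $h_0$ is a product of $\|\ind\|$ conjugates of $g$, and each commutator $[h_i,f_i]=h_i^{-1}f_i^{-1}h_if_i$ at most doubles the number of $g^{\pm 1}$-conjugates needed, since conjugating and inverting preserve conjugate-products. Hence $h_k$ is a product of at most $2^k\|\ind\|$ conjugates of $g$ and $g^{-1}$, and since $2^k<2\|\ind\|$ by the choice of $k$, this is at most $2\|\ind\|^2$, matching the claimed bound.
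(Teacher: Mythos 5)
Your proof is correct and follows essentially the same route as the paper: bootstrap via Lemmas~\ref{lem:exists1}, \ref{lem:movesall1}, \ref{lem:movesall} to get an automorphism moving every type almost maximally or by distance $1$, pump with Lemma~\ref{lem:pump} up to distance $2^{\lceil\log_2\|\ind\|\rceil}\ge\|\ind\|$, and close with Boundedness, with the same conjugate count. One small point in your favour: you make explicit the appeal to Perfect triviality to pass from $a\ind_\emptyset h_k(a)$ to $a\ind_X h_k(a)$, a step the paper compresses into an ``i.e.''.
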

\begin{proof}
From Lemma~\ref{lem:exists1} we get an automorphism $g_0$ which is a product of at most $\|\ind\|$ conjugates of $g$ such that there is $a\in \F$ with $a\ind_\emptyset g_0(a)$. Using Lemma~\ref{lem:movesall1} we get that in fact for every free type there is a realisation which is not fixed by $g_0$.

Let $p=\tp(x/X)$ be a type. Either $x\in X$ (then $x\ind_X g(x)$, hence $g_0$ moves $q$ almost maximally), or $x\notin X$ and thus by Lemma~\ref{lem:movesall} there is a realisation of $p$ which is not fixed by $g_0$. This means that $g_0$ moves all types almost maximally or by distance 1.

Put $n=\lceil\log_2(\|\ind\|)\rceil$ and construct a sequence $g_0,g_1,\ldots, g_n$ of automorphisms of $\F$ using Lemma~\ref{lem:pump} such that every $g_i$ moves all types almost maximally or by distance $2^i$, and if $i\geq 1$ then $g_i$ is a product of two conjugates of $g_{i-1}$ and $g_{i-1}^{-1}$. For $g_{n}$ we get that it moves every type almost maximally or by distance at least $\|\ind\|$. In the latter case, we have for every type $p$ a realisation $a\models p$ and a geodesic sequence $a=a_0,\ldots,a_k=g(a)$, where $k\geq \|\ind\|$. Boundedness~(Definition~\ref{defn:bounded}) implies that $a\ind_\emptyset g(a)$, i.e. $g_n$ moves $p$ almost maximally, and hence $g_n$ moves all types almost maximally.

By the construction, $g_n$ is a product of at most $2^{\lceil\log_2(\|\ind\|)\rceil}$ conjugates of $g_0$ and $g_0^{-1}$, hence a product of at most $2^{\lceil\log_2(\|\ind\|)\rceil}\|\ind\| \leq 2\|\ind\|^2$ conjugates of $g$ and $g^{-1}$.
\end{proof}

\begin{proof}[Proof of Theorem~\ref{thm:main}]
Let $g$ be a non-identity automorphism of $\F$. We need to prove that if $N$ is a normal subgroup of $G$ such that $g\in N$, then $N=G$. If $g\in N$, then clearly $g^{-1}\in N$. Let $h\in G$. By Proposition~\ref{prop:main} and Theorem~\ref{thm:tz}, we know that $h$ can be written as a product of conjugates of $g$ and $g^{-1}$, hence $h\in N$. This is true for every $h\in G$, hence $N=G$ and $G$ is simple.
\end{proof}

\section{Corollaries}\label{sec:corollaries}
In this section we prove Theorems~\ref{thm:semigroups} and~\ref{thm:cherlin}.

\subsection{Semigroup-valued metric spaces}\label{sec:semigroups}
We say that a tuple $\ESemig$ is a \emph{\pocs} if the following hold:
\begin{enumerate}
\item $(M,\oplus)$ is a commutative semigroup,
\item $(M, \mleq)$ is a partial order which is reflexive ($a\mleq a$ for every $a\in M$),
\item for every $a, b\in M$ it holds that $a\mleq a\oplus b$, and
\item for every $a,b,c\in M$ it holds that if $b\mleq c$ then $a\oplus b \mleq a\oplus c$ ($\oplus$ is monotone with respect to $\mleq$).
\end{enumerate}
$\Semig$ is \emph{archimedean} if for every $a,b\in \Semig$ there is an integer $n$ such that $n\times a\mgeq b$, where by $n\times a$ we mean
$$\underbrace{a\oplus a \oplus \cdots \oplus a}_\text{$n$ times}.$$
Note that if $\Semig$ is archimedean and non-trivial, it follows that $\Semig$ does not have an identity.

Let $L$ be a set. An \emph{$L$-edge-labelled graph} is a tuple $\str A = (A, E, d)$, where $E\subseteq {A\choose 2}$ and $d$ is a function $E\to L$. Clearly, the set $E$ can be inferred from the function $d$ and thus we omit it. For simplicity, we write $d(x,y)$ instead of $d(\{x,y\})$ and we put $d(x,x)=0$, where $0$ is a symbol which is not an element of $\Semig$. When convenient, we naturally understand $0$ as the neutral element with respect to $\oplus$ and as the minimum element of $\mleq$.

We say that $\str A$ is \emph{complete} if the graph $(A,E)$ is a complete graph. Note that an $L$-edge-labelled graph can equivalently be viewed as a relational structure with an irreflexive binary symmetric relation $R^m$ for every $m\in L$ such that every pair of vertices is in at most one relation.

For a \pocs{} $\ESemig$, a complete $\Semig$-edge-labelled graph $\str A = (A, d)$ is an \emph{$\Semig$-metric space} if for every triple $a,b,c\in A$ of distinct vertices it holds that $d(a,b)\mleq d(a,c)\oplus d(b,c)$ (the \emph{triangle inequality}).

\medskip

Let $\F$ be an $\Semig$-metric space. We say that $\F$ \emph{admits an $\Semig$-shortest path independence relation} if for every $a,b\in \F$ and $C\subseteq \F$ finite we have that $\{d(a,c)\oplus d(c,b) : c\in C\}$ has an infimum with respect to $\mleq$ (note that $C$ can be empty which implies that $\Semig$ has maximum $\inf_\mleq(\emptyset)$). If $\F$ admits an $\Semig$-shortest path independence relation, then its \emph{$\Semig$-shortest path independence relation} is a ternary relation $\ind$ defined on finite subsets of $\F$ by putting $A\ind_C B$ if and only if for every $a\in A$ and every $b\in B$ it holds that $d(a,b)=\inf_\mleq\{d(a,c)\oplus d(c,b) : c\in C\}$.

Generalising concepts of Sauer~\cite{Sauer2012}, Conant~\cite{Conant2015} (see also~\cite{Hubicka2017sauerconnant}) and Braunfeld~\cite{Sam} (see also~\cite{Kabil2018}), Hubi\v cka, Kone\v cn\'y and Ne\v set\v ril~\cite{Konecny2018b,Hubicka2017sauer} introduced the framework of semigroup-valued metric spaces, which served as a motivation for this paper. Given a \pocs{} $\ESemig$ and a ``nice'' family $\mathcal F$ of $\Semig$-edge-labelled cycles, the structures of interest are $\Semig$-metric spaces which moreover contain no homomorphic images of members of $\mathcal F$. We will denote the class of all such finite structures $\mathcal M_\Semig^\mathcal F$.

The conditions of $\mathcal F$ are strong enough that one can then prove that $\mathcal M_\Semig^\mathcal F$ is a strong amalgamation class, its \Fraisse{} limit admits an $\Semig$-shortest path independence relation which is a SIR (provided that $\Semig$ has a maximum, otherwise one can still get a \textit{local} SIR), it has EPPA (for background, see~\cite{Hubicka2018EPPA,Siniora2}) and a precompact Ramsey expansion (for background, see~\cite{Hubicka2016,NVT14}), but they are general enough that most known binary symmetric homogeneous structures can be viewed as such a semigroup-valued metric space. In fact, it is conjectured that every primitive transitive homogeneous structure in a finite binary symmetric language with trivial algebraic closures admits such an interpretation (Conjecture~1 in~\cite{Konecny2018b}).

Now we are ready to prove Theorem~\ref{thm:semigroups}.
\begin{proof}[Proof of Theorem~\ref{thm:semigroups}]
We need to prove that $\ind$ is metric-like and bounded. (In fact, we do not need 1-supportedness for this, we only need it later in order to apply Theorem~\ref{thm:main}.)

Since $\F$ is homogeneous, all vertices have the same type. As $d(x,y)=0$ if and only if $x=y$ and $\oplus$ is monotone with respect to $\mleq$, it follows that if $a\notin A$, then $a\nind_A a$. The fact that there are $a\neq b\in \F$ such that $a\nind_\emptyset b$ follows from Stationarity, the fact that $\Semig$ has at least two elements (remember that $0\notin \Semig$) and the fact that $\F$ realises all distances.

Suppose now that $a\ind_C b$. If there was $c'\in \F\setminus C$ such that $a\nind_{Cc'} b$, this would mean that $\inf_\mleq\{d(a,c)\oplus d(c,b) : c\in C\cup\{c'\}\} \mlt \{d(a,c)\oplus d(c,b) : c\in C\} = d(a,b)$, hence $d(a,c')\oplus d(c',b)\nmgeq d(a,b)$, in other words, $abc'$ violates the triangle inequality which is a contradiction. Consequently, $\ind$ satisfies Perfect triviality and hence $\ind$ is metric-like.

\medskip

Next we prove that $\ind$ is bounded. Denote by $1$ the maximum element of $\Semig$ ($\Semig$ is finite and hence there is such an element). Assume that there are $a,b\in \Semig$ such that $a\oplus b = a$. This means (by associativity) that $a\oplus (n\times b) = a$ for every $n$. Let $c\in \Semig$ be arbitrary. By archimedeanity there is $n$ such that $n\times b \mgeq c$. But then $a = a\oplus (n\times b) \mgeq c$. Hence $a\mgeq c$ for every $c\in \Semig$, that is, $a=1$. In other words, for every $a,b\in \Semig\setminus \{1\}$ it holds that $a\oplus b\mgt a$, which implies that whenever $a_1,\ldots,a_{\lvert\Semig\rvert}\in \Semig$, then
$$\bigoplus_{i=1}^{\lvert\Semig\rvert} a_i = 1.$$

We can use this observation to prove that $\|\ind\| \leq \lvert\Semig\rvert$. Indeed, if $a_0,\ldots,a_{\lvert\Semig\rvert}$ is a geodesic sequence, we know that $d(a_0,a_{i+1})=d(a_0,a_i)\oplus d(a_i,a_{i+1})$. Using induction we get that
$$d(a_0,a_{\lvert\Semig\rvert})=d(a_1,a_2)\oplus d(a_2,a_3) \oplus \cdots \oplus d(a_{\lvert\Semig\rvert-1},a_{\lvert\Semig\rvert}),$$
that is, $d(a_0,a_{\lvert\Semig\rvert})$ is a sum of $\lvert\Semig\rvert$ elements of $\Semig$ and hence $d(a_0,a_{\lvert\Semig\rvert})=1$, which means that indeed $a_0\ind_\emptyset a_{\lvert\Semig\rvert}$.

We have proved that $\ind$ is bounded and metric-like, hence we can apply Theorem~\ref{thm:main} to show that $\Aut(\F)$ is simple.
\end{proof}

Note that whenever $\mleq$ is a linear order, the corresponding $\Semig$-shortest path independence relation is necessarily 1-supported. The following theorem is a direct consequence of this fact, Theorem~\ref{thm:semigroups} and existing results on semigroup-valued metric spaces~\cite{Konecny2018b,Hubicka2017sauer}.

Let $S\subseteq \mathbb R^+$ be a finite subset of positive reals such that the following operation $\oplus_S\colon S^2\to S$ is associative:
$$a\oplus_S b = \max\{x\in S : x \leq a+b\}.$$
Delhomm{\'e}, Laflamme, Pouzet, and Sauer~\cite{Delhomme2007} studied and Sauer later classified~\cite{Sauer2013,sauer2013b} such subsets. Ramsey expansions for all such classes of $(S, \oplus_S, \leq)$-metric spaces were obtained by Hubi\v cka and Ne\v set\v ril~\cite{Hubicka2016}, and Hubička, Kone\v cn\'y, Ne\v set\v ril and Sauer~\cite{HubickaSauerSmetric} (Nguyen Van Th\'e~\cite{NVT2009} earlier proved some partial results). We contribute to the study of such classes by the following result:
\begin{theorem}
Let $S\subseteq \mathbb R^+$ be a finite subset of positive reals such that $\Semig_S=(S, \oplus_S, \leq)$ is an archimedean \pocs{}. Then the automorphism group of the \Fraisse{} limit of the class of all finite $\Semig_S$-metric spaces is simple.
\end{theorem}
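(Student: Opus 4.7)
The plan is to verify the three hypotheses of Theorem~\ref{thm:semigroups} for the \Fraisse{} limit $\F$ of the class of all finite $\Semig_S$-metric spaces. Since $\Semig_S$ is by assumption a finite archimedean \pocs{} (implicitly with $|S|\geq 2$, the one-element case giving $\Aut(\F)=\Sym(\omega)$, which is not simple), what remains is to establish (a) that $\F$ exists as a homogeneous $\Semig_S$-metric space realising every distance in $S$, (b) that $\F$ admits an $\Semig_S$-shortest path independence relation $\ind$ which is a SIR, and (c) that $\ind$ is 1-supported.

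For (a) and (b) I would invoke the framework of semigroup-valued metric spaces developed by Hubi\v cka, Kone\v cn\'y and Ne\v set\v ril~\cite{Konecny2018b,Hubicka2017sauer}: with the empty family of forbidden cycles, their results imply that the class of all finite $\Semig_S$-metric spaces is a strong amalgamation \Fraisse{} class whose limit $\F$ carries an $\Semig_S$-shortest path independence relation which is a SIR. The infima in the definition of $\ind$ are well-defined because $(S,\leq)$ is a finite linear order with maximum $\max S$, so even the empty infimum is handled. Every distance $s\in S$ is realised in $\F$ because the two-vertex $\Semig_S$-metric space whose single edge is labelled $s$ belongs to the class.

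For (c) I would use the observation already highlighted in the remark preceding the theorem: if $a\ind_C b$, then
$$d(a,b) = \inf_{\leq}\{d(a,c)\oplus_S d(c,b):c\in C\},$$
where for $C=\emptyset$ the right-hand side equals $\max S$. Since $(S,\leq)$ is a finite linear order, this infimum is in fact a minimum, attained at some $c^*\in C$ (or else $C=\emptyset$ and we take $C'=\emptyset$). Then $d(a,b)=d(a,c^*)\oplus_S d(c^*,b)$ and hence $a\ind_{\{c^*\}} b$, so $\ind$ is 1-supported. All hypotheses of Theorem~\ref{thm:semigroups} are now met, and $\Aut(\F)$ is simple. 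The only real obstacle is bookkeeping: locating the precise \Fraisse-class and SIR statements in~\cite{Konecny2018b,Hubicka2017sauer} needed for (a) and (b). The rest is automatic, with the linearity of $\leq$ doing all the substantive work by collapsing each $\inf_{\leq}$ to a single-point minimum.
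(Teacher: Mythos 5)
Your proof is correct and follows exactly the route the paper itself takes: apply Theorem~\ref{thm:semigroups} after noting that linearity of $\leq$ collapses the infimum defining $\ind$ to a single point (giving 1-supportedness), and import existence of $\F$ and the SIR from the semigroup-valued metric space framework of~\cite{Konecny2018b,Hubicka2017sauer}. The paper states this only as ``a direct consequence of this fact, Theorem~\ref{thm:semigroups} and existing results''; your write-up fills in the same details the authors leave implicit, including the sensible caveat that $|S|\geq 2$ is required.
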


\subsection{Metrically homogeneous graphs}
A \emph{metrically homogeneous graph} is a graph whose path-metric is a homogeneous metric space. Cherlin~\cite{Cherlin2011b,Cherlin2013} gave a list of such graphs by describing the corresponding amalgamation classes of metric spaces. The vast majority of the list is occupied by the 5-parameter classes $\Aclass$, where $\delta$ denotes the diameter of such spaces (i.e. they only use distances $\{1,\ldots,\delta\}$) and the other four parameters describe four different families of forbidden triangles (for example, all triangles of odd perimeter smaller than $2K_1$ are forbidden).

Aranda, Bradley-Williams, Hubi{\v c}ka, Karamanlis, Kompatscher, Kone{\v c}n{\'y} and Pawliuk~\cite{Aranda2017,Aranda2017a,Aranda2017c} studied EPPA, Ramsey expansions and (local) SIR's for these classes (see also~\cite{Konecny2018bc,eppatwographs,Konecny2019a}). In particular, if $\Aclass$ is primitive (i.e. it is neither antipodal nor bipartite) and $\delta$ is finite, it can be shown using another result of Hubi\v cka, Kompatscher and Kone\v cn\'y~\cite{Hubickacycles2018} that these (local) stationary independence relations are 1-supported and can be viewed as $\Semig$-shortest path independence relations~\cite{Konecny2018b} with a finite archimedean $\Semig$, which means that Theorem~\ref{thm:cherlin} is a direct consequence of Theorem~\ref{thm:semigroups}.

\section{Conclusion}
We conclude with two questions and a conjecture. The first question is a particular instance of the general question whether 1-supportedness is necessary.
\begin{question}
Consider the structure $\mathbb M_k$ from Example~\ref{ex:non1sup}, that is, the \Fraisse{} limit of all finite $[n]^k$-metric spaces (which are in fact semigroup-valued metric spaces in the sense of Section~\ref{sec:semigroups}). Is the automorphism group of $\mathbb M_k$ simple? (For $k\geq 2$ and $n$ large enough -- if, for example, $n=3$, it is in fact a free amalgamation class, as $(2, \ldots, 2)$ is a free relation.)
\end{question}

The obvious next step is to generalise our results to countable archime\-dean semigroups which do not have to contain a maximum element, thereby obtaining and analogue of Tent and Ziegler's result on the Urysohn space~\cite{Tent2013}. We believe that such a generalisation is quite straightforward. However, there are structures in infinite language which do not even admit a SIR, although they are also very much metric-like. One example is the \emph{sharp Urysohn space}:
\begin{question}
\label{q:sharpurysohn}
Let $\mathbb U^\#$ be the \Fraisse{} limit of the class of all finite complete $\mathbb Q^+$-edge-labelled graphs (here $\mathbb Q^+$ is the set of all positive rational numbers) which contain no triangles $a,b,c$ with $d(a,b)\geq d(a,c)+d(b,c)$ (that is, the triangle inequality is sharp). Is the automorphism group of $\mathbb U^\#$ simple modulo bounded automorphisms?
\end{question}
Note that if we consider $\mathbb N$ instead of $\mathbb Q^+$, the resulting structure can be understood as an $\Semig$-metric space (putting $a\oplus b = a+b-1$ and $a\mleq b$ if $a\leq b$).
\begin{remark}
The sharp Urysohn space is a very peculiar structure, because although it does not admit a SIR, it has EPPA, APA and it is Ramsey when equipped with a (free) linear order.
\end{remark}

The following conjecture and question are closely related to a conjecture from~\cite{Konecny2018b}.	
\begin{conjecture}
Every countable homogeneous complete $L$-edge-labelled graph with $2\leq \lvert L\rvert < \infty$, primitive automorphism group and trivial algebraic closure admits a metric-like SIR.
\end{conjecture}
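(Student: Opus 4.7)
The strategy would be to reduce the conjecture to the (essentially equivalent) conjecture from \cite{Konecny2018b} asserting that every such structure admits an interpretation as a semigroup-valued metric space, and then invoke the $\Semig$-shortest path independence relation. Once such an interpretation is in place, the argument already carried out in the proof of Theorem~\ref{thm:semigroups} does the rest: the resulting ternary relation is automatically a SIR, Perfect triviality reduces to the triangle inequality in $\Semig$, and the non-reflexivity condition $a\nind_A a$ for $a\notin A$ is immediate from the fact that $0\notin\Semig$. Thus the substantive content of the conjecture is the construction of a suitable \pocs{} structure on the label set.

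Concretely, I would try to build $\Semig$ as follows. Take $L$ together with an external symbol $0$ as the underlying set. Define $\mleq$ by declaring $\ell_1\mleq \ell_2$ iff for every finite substructure $\str A$ of $\F$ and every $\ell_2$-labelled edge $\{u,v\}$ in $\str A$, the structure obtained from $\str A$ by relabelling $\{u,v\}$ with $\ell_1$ (with $\ell_1=0$ interpreted as identifying $u$ and $v$) lies in the age of $\F$. Trivial algebraic closure ensures that the identification case is well-behaved, and primitivity should imply that the condition is independent of the edge chosen, giving a partial order with $0$ as minimum. Next, define $\oplus$ on $L$ by letting $\ell_1\oplus\ell_2$ be the $\mleq$-maximum $\ell$ such that some triangle with edge labels $\{\ell_1,\ell_2,\ell\}$ is realised in $\F$; this encodes the ``freest'' completion of two given edges meeting at a vertex. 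The candidate independence relation is then $A\ind_C B$ iff $d(a,b)=\inf_\mleq\{d(a,c)\oplus d(c,b):c\in C\}$ for all $a\in A$, $b\in B$, and one needs to check that the induced amalgamation is always realised inside $\F$.

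The main obstacle lies in this construction: showing that $\oplus$ is well-defined (that a unique $\mleq$-maximum exists), that it is associative, and that the triangle inequality holds, using only primitivity and trivial algebraic closure. These properties are quite rigid — they essentially assert that the amalgamation of two edges over a common vertex is uniquely determined, up to a uniform canonical choice, by the two edge labels alone. Primitivity rules out certain obvious obstructions, such as non-trivial $\Aut(\F)$-invariant equivalence relations on $L$ that would produce product-like behaviour, but it is not at all clear that it suffices. A plausible route is a fine analysis of forbidden $3$- and $4$-vertex configurations, showing they always organise into a semigroup-compatible pattern when the hypotheses hold; however, doing this in full generality appears to require genuinely new structural insight into the ages of primitive binary homogeneous structures, and I expect this to be the principal difficulty, as reflected by the fact that the statement is still a conjecture.
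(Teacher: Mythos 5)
This statement is an open conjecture in the paper; there is no proof to compare your proposal against. What you have written is not a proof but a research programme, and you say so yourself at the end, so let me instead assess whether the programme is sound and where it stands.

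Your reduction is the intended one: the authors explicitly link this conjecture to Conjecture~1 of~\cite{Konecny2018b} (that every such structure admits a \pocs{}-valued metric interpretation), and once that interpretation exists, the argument from the proof of Theorem~\ref{thm:semigroups} indeed shows the shortest-path relation is metric-like. You are also right that the whole substance is in building the semigroup.

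However, the specific construction you sketch has several unaddressed obstructions beyond the one you flag. First, your $\mleq$ is defined via a universally quantified relabelling condition, and it is not clear it is antisymmetric; two distinct labels could each be replaceable by the other in every finite substructure (e.g.\ if there is a label-swapping automorphism, which primitivity does not forbid). Second, for $\oplus$ you posit a $\mleq$-maximum label completing a two-edge path, but the set of labels completing a given pair of edges need not have a unique maximum in a merely partial order -- this is precisely the situation Example~\ref{ex:non1sup} is warning about, where antichains occur. Third, even granting a well-defined $\oplus$, the ``freest completion of two edges'' recipe gives a magma, and associativity would have to be extracted from amalgamation over a \emph{pair} of vertices rather than a single one, which is a genuinely different constraint; strong amalgamation alone does not give it. Fourth, and crucially, it is not enough to define the ternary relation -- you must show it satisfies Existence and Stationarity, i.e.\ that the shortest-path completion of $AC$ and $BC$ over $C$ is actually realised in $\F$ and is the canonical one; this is an amalgamation property of the age that does not follow formally from the axioms you list. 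None of these steps are routine, and collectively they are the content of the conjecture, so the proposal should be read as an honest statement of the problem rather than progress towards it.
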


\begin{question}\label{q:q}
Assume that $\F$ is a transitive countable structure with a metric-like SIR $\ind$ such that $\tp(ab)=\tp(ba)$ for every $a,b\in \F$. Can one define a \pocs{} $\Semig$ on the 2-types of $\F$ such that $\ind$ is the $\Semig$-shortest path independence relation? If the answer is yes, is it true that for every $a\neq b\neq c\in \F$ it holds that $\tp(ab)\mleq \tp(ac)\oplus\tp(bc)$?
\end{question}

The obvious special cases of Question~\ref{q:q} are for finitely many 2-types, 1-supported $\ind$, bounded $\ind$, and their combinations. It is not true that the conditions of Question~\ref{q:q} imply that the structure at hand is an $\Semig$-metric space in the sense of~\cite{Konecny2018b,Hubicka2017sauer}. For example, suppose that $\F$ is the \Fraisse{} limit of the class of all $[n]^1$-metric spaces which also contain a ternary relation $R$ such that if $(a,b,c)\in R$, then $d(a,b)=d(b,c)=d(c,a)=1$. The standard $([n],+,\leq)$-shortest path independence relation is the desired SIR on $\F$.

\bibliography{ramsey.bib}
\end{document}